\documentclass[11pt]{article}


\usepackage[english]{babel}
\usepackage{tabularx} 
\usepackage[pdftex]{graphicx}
\usepackage[utf8]{inputenc}
\usepackage{amsmath,amssymb,amsfonts} 
\usepackage{amsthm} 
\usepackage{color} 
\usepackage[round,longnamesfirst]{natbib}
\usepackage{graphicx}
\usepackage[hyperfootnotes=true,plainpages=false,pdfpagelabels=true]{hyperref}
\usepackage{geometry}
\usepackage{calc} 
\setlength\textwidth{6in} 
\setlength\textheight{23cm} 
\setlength\oddsidemargin{(\paperwidth-\textwidth)/2 - 2cm} 
\setlength\topmargin{(\paperheight-\textheight-\headheight-\headsep-\footskip)/2 - 2cm}

\newtheorem{thm}{Theorem}
\newtheorem{lem}{Lemma}

\newtheorem{definition}{Definition}
\newtheorem{remark}{Remark}\theoremstyle{remark}

\def\R{\mathbb{R}}
\def \esp{\mathbb{E}}
\def \esps{\mathbb{E}^\star}
\def\ynew{Y_{\mbox{\tiny new}}}
\def\cov{{\mbox{cov}}}

\def\bU{\boldsymbol{U}}
\def\bu{\boldsymbol{u}}
\def\bY{\boldsymbol{Y}}
\def\bZ{\boldsymbol{Z}}
\def\bz{\boldsymbol{z}}
\def\bbeta{\boldsymbol{\beta}}
\def\btheta{\boldsymbol{\theta}}
\def\bmu{\boldsymbol{\mu}}
\def\bepsilon{\boldsymbol{\varepsilon}}
\def\hbbeta{\hat{\bbeta}}
\def\hbtheta{\hat{\btheta}}
\def\Tr{{\rm tr}}

\def\vec{{\rm vec}}

\def\FisherInfo{{\mathcal I}}

\def\iid{{\it i.i.d.}~}

\def\opt2m{{I^\star}}

\def\clQ{\mathcal{Q}}

\interfootnotelinepenalty=0


\title{AIC, $C_p$ and estimators of loss for elliptically symmetric distributions} 
\author{Aur\'elie Boisbunon, 
\thanks{
Normandie Université, Universit\'e de Rouen, LITIS EA 4108,
Avenue de l'Universit\'e, BP 12, 76801 Saint-\'Etienne-du-Rouvray,
France.
}
\and        St\'ephane Canu, 
\thanks{
Normandie Université, INSA de Rouen, LITIS EA 4108,
Avenue de l'Universit\'e, BP 8, 76801 Saint-\'Etienne-du-Rouvray,
France.
}
\and  Dominique Fourdrinier
\thanks{
Universit\'e de Rouen, LITIS EA 4108,
Avenue de l'Universit\'e, BP 12, 76801 Saint-\'Etienne-du-Rouvray,
France,
and Cornell University, Department of Statistical
Science,
1176 Comstock Hall, Ithaca, NY 14853, USA
(e-mail: dominique.fourdrinier@univ-rouen.fr).
}
\and William Strawderman
\thanks{
Rutgers University, Department of Statistics
(e-mail: straw@stat.rutgers.edu).
}
\and Martin T. Wells
\thanks{
Cornell University, Department of Statistical Science,
1190 Comstock Hall, Ithaca, NY 14853, USA.
(e-mail: mtw1@cornell.edu).
}
}
 \begin{document}

\maketitle{}

\begin{abstract}
In this article, we develop a modern perspective on Akaike's Information Criterion (AIC)
and Mallows' $C_p$ for
model selection, and  proposes  generalizations to spherically {and elliptically} symmetric distributions.
Despite the differences in their respective
motivation, $C_p$ and AIC  are equivalent in the special case of Gaussian linear
regression. In this case they are also equivalent to a third criterion, an unbiased estimator of the
quadratic prediction loss, derived from loss estimation theory.
We then show that the form of the unbiased estimator of the quadratic prediction loss
under a Gaussian assumption still holds under  a more general distributional assumption, the family of spherically  symmetric distributions.
One of the features of our results is that our criterion does not rely on the specificity of the distribution, but only on its spherical symmetry.
The same kind of criterion can be derived for a family of elliptically contoured distribution, which allows correlations, when considering the invariant loss.
More specifically, the unbiasedness property is relative to a distribution associated to the original density.
\end{abstract}

\noindent {\bf Keywords:}
{variable selection},
{loss estimation},
{unbiasedness},
{SURE}, 
{Stein identity},
{spherically and elliptically symmetric distributions}.

%
%

\section{Introduction}
\label{sec:intro}

The problem of model selection has generated a lot of interest
for many decades now and especially recently
with the increased  size of  datasets.
  {In such a context, it is important to model}
the  observed data in a way that accounts for the sparsity of the parameter space.
The principle of parsimony helps to avoid classical issues such as overfitting or computational 
error. At the same time, the model should capture sufficient
information in order to comply with some objectives of good
prediction, good estimation or good selection and thus it should not
be too sparse. This principle has been elucidated by many statisticians as a
trade-off between goodness of fit to data and complexity of the model
\citep*[see for instance][Chapter 7]{hastie2008elements}.
From the practitioner's point of view, model selection is often implemented through
cross-validation \citep[see][for a review on this topic]{arlot2010survey}
or the minimization of criteria whose theoretical justification relies on hypotheses made within a given framework.
In this paper,
we review two of the most commonly used criteria,
namely Mallows' $C_p$  and Akaike's AIC, together with the associated theory
 under Gaussian distributional assumptions,
and then we propose a generalization to spherically {and elliptically} symmetric distributions.

We will focus initially on the linear regression model
\begin{equation}
  \label{eq:LM1}
  Y = X\beta+\sigma\varepsilon,
\end{equation}
where $Y$ is a random vector in $\mathbb{R}^n$, $X$ is a fixed and
known full rank design matrix containing $p$ observed variables $\mathbf{x}^j$ in
$\mathbb{R}^{n}$, $\beta$ is the unknown vector in $\mathbb{R}^p$ of
regression coefficients to be estimated,
{$\sigma$ is the  noise level} and $\varepsilon$ is a random vector in
$\mathbb{R}^{n}$ representing the model noise, with mean zero and
covariance matrix proportional to the identity matrix
(we assume the proportion coefficient to be equal to one when $\varepsilon$  is  Gaussian).
One subproblem of model selection is the problem of variable
selection: only a subset 
of the variables in $X$ gives sufficient
and nonredundant information on $Y$ and we wish to recover this subset
as well as correctly estimate the corresponding regression coefficients.

Early work treated the model selection problem from the hypothesis testing point
of view. For instance the  Forward Selection and Backward
Elimination procedures were stopped using appropriate critical values.
 This practice  changed with Mallows' automated criterion
known as $C_p$ \citep{mallows1973some}. Mallows' idea was to propose an
unbiased estimator of the scaled expected prediction error
$\mathbb{E}_\beta[\|X\hat\beta_I-X\beta\|^2/\sigma^2]$,
where $\hat\beta_I$ is an estimator of $\beta$
based on the selected variables set $I \subset\{1,\dots,p\}$,
$\mathbb{E}_\beta$ denotes the expectation with respect to the sampling distribution in model \eqref{eq:LM1}
and $\|\cdot\|$ is the Euclidean norm on $\mathbb{R}^n$. Assuming Gaussian \iid error terms, Mallows proposed the following
criterion
\begin{equation}\label{eq:Cp}
  C_p = \frac{\|Y - X\hat\beta_I \|^2\!\!\!}{\hat\sigma^2}\;+2\widehat{df}-n,
\end{equation}
where $\hat\sigma^2$ is 
an estimator of the variance $\sigma^2$ based on the full linear model fitted with
the least-squares estimator $\hat\beta^{LS}$, that is,
$\hat\sigma^2=\| Y - X\hat\beta^{LS} \|^2/(n-p)$, and
$\widehat{df}$ is an estimator of $df$, the
degrees of freedom, also called the effective dimension of
the model \citep[see][]{hastie1990generalized,meyer2000degrees}.
For the least squares estimator, {$df$ is the number $k$ of variables in the selected subset~$I$}. 
Mallows' $C_p$ relies on the assumption that, if for some subset $I$
of explanatory variables
the expected prediction error is low, then those
variables to be relevant for predicting $Y.$ In practice, the rule for
selecting the ``best'' candidate is the minimization of $C_p.$
However, Mallows argues that this rule should not be applied in all
cases, and that it is better to look at the shape of the $C_p$-plot instead,
especially when some explanatory variables are highly correlated.

In 1974, Akaike proposed different
automatic criteria that would not need a subjective calibration of the
significance level as in hypothesis testing based approaches. His proposal was
more general with application to many problems such as variable selection, factor analysis,
analysis of variance, or order selection in autoregressive models
\citep{akaike1974new}. Akaike's motivation however was 
different from Mallows.
Akaike considered  the problem of estimating the density $f(\cdot|\beta)$ of an outcome variable
$Y$, where $f$ is parameterized by $\beta\in\mathbb{R}^p$. 
His aim was to
generalize the principle of maximum likelihood enabling a selection
between several maximum likelihood estimators $\hat\beta_I$.
Akaike
showed that all the information for discriminating the estimator $f(\cdot|\hat\beta_I)$ from the true $f(\cdot|\beta)$ could be summed up by the
Kullback-Leibler divergence
$D_{KL}(\hat\beta_I,\beta) =\mathbb{E}[\log
f(\ynew|\beta)]-\mathbb{E}[\log f(\ynew|\hat\beta_I)]$
where the expectation is taken over new observations.
An accurate quadratic approximation to this divergence is possible when
$\hat\beta_I$ is sufficiently close to $\beta$, which actually
corresponds to the distance $\|\hat\beta_I-\beta\|^2_{\FisherInfo}/2$ where
$\FisherInfo=-\mathbb{E}[(\partial^2 \log  f/\partial
    \beta_i\partial \beta_j)_{i,j=1}^p]$ is the
Fisher-information matrix and for a vector $\mathbf{z}$, its weighted norm $\|\mathbf{z}\|_{\FisherInfo}$ is defined by
$(\mathbf{z}^t \FisherInfo \mathbf{z})^{1/2}$. By means of asymptotic analysis
and by considering the expectation of $D_{KL}$ 
Akaike arrived at the following criterion
\begin{equation}
  \label{eq:AIC}
  \text{AIC} = -2\sum_{i=1}^n\log f(y_i|\hat\beta_I)+2k,
\end{equation}
where $k$ is the number of parameters of $\hat\beta_I$. In the special case of
a Gaussian distribution, AIC and $C_p$ are equivalent up to a
constant for model \eqref{eq:LM1} (see Section \ref{sec:link}). Hence Akaike described his AIC as a generalization of $C_p$ to a more general class of models. 
Unlike Mallows, Akaike explicitly recommends the rule of minimization
of AIC 
to identify the best model from
data. Note that \cite{ye1998measuring} proposed to extend AIC to more complex settings by replacing $k$
by the estimated  degrees of freedom $\widehat{df}$.

\enlargethispage{.2cm}

Both {$C_p$ and AIC} have been criticized in the
literature, especially for the presence of the constant 2 tuning the adequacy complexity/trade-off and favoring complex
models in many situations, and many authors have proposed some
correction  \citep[see][]{schwarz1978estimating,foster1994risk,shibata1980asymptotically}.
Despite these critics, these criteria are still quite popular among
practicioners. Also they can be very useful in deriving better criteria
of the form $\delta=\delta_0-\gamma$, where $\delta_0$ is equal
to $C_p$, AIC or an equivalent and $\gamma$ is a correction function
based on data. This framework, referred to as loss estimation, has
been successfully used by
\cite{johnstone1988inadmissibility} and \cite{fourdrinier1995estimation}, among
others, to propose good criteria for selecting the best submodel.

Another possible criticism of $C_p$ and AIC regards their strong
distributional assumptions. Indeed, $C_p$'s unbiasedness in the \iid
Gaussian  case, while AIC requires  specification of the  distribution.
However, in many practical cases, we might
not have any prior knowledge or intuition about the form of the
distribution, and we want the result to be robust with respect to a wide family of
distributions.

The purpose of the present paper is threefold:
\begin{itemize}
\item First, we show in Section \ref{sec:loss-estim} that the
  procedures $C_p$ and AIC are equivalent to unbiased estimators of
  the quadratic prediction loss when $Y$ is assumed to be
  Gaussian in model  \eqref{eq:LM1}. This result is an   important initial step
   for deriving improved criteria as
  is done in
  \cite{johnstone1988inadmissibility} and \cite{fourdrinier2012improved}.
  Both references consider the case of improving the
  unbiased estimator of loss based on the data.
The derivation of  better criteria will not be covered in the present article.
\item Secondly,
  we derive the unbiased loss estimator for the wide family of
  spherically symmetric distributions and show that, for any spherical
  law, this unbiased estimator is the same as that derived under the
  Gaussian model.
 The family of spherically symmetric distribution is a large family which generalizes the multivariate standard normal  law.
  Also, the spherical assumption frees us from the independence assumption of the error terms in  \eqref{eq:LM1},
  while not rejecting it since the Gaussian law is itself spherical.
  Furthermore, some members of the spherical family, like the Student law, have heavier tails than the Gaussian density
  allowing a better handling of potential outliers.
  Finally, the results of the
  present work do not depend on the specific form of the distribution.
  The last two points provide some distributional robustness.
\item {Thirdly, {we extend these results to a model with possible correlation structure. Indeed,} for a family of elliptically symmetric distributions with unknown scale matrix $\Sigma$,  we derive an unbiased estimator of the invariant loss associated to $\Sigma$.
To this end, the notion of unbiasedness has to be adapted  to take into account the fact that $\Sigma$ is unknown.
Our results cover a multivariate  version of the regression model where it is possible to estimate the unknown covariance matrix from the observed data.}
\end{itemize}


\section{Expression of AIC and $C_p$ in the loss estimation framework}
\label{sec:loss-estim}

\subsection{Basics of loss estimation}
\label{sec:loss-theory}

The idea underlying the estimation of loss is closely related to Stein's
Unbiased Risk Estimate \citep[SURE,][]{stein1981estimation}. The theory
of loss
estimation was initially developed for problems of estimation of the
location parameter of a multivariate distribution \citep[see {\em e.g.}][]{johnstone1988inadmissibility,fourdrinier2003bayes}.
The principle is classical in statistics and goes as follow: we wish to evaluate the accuracy of a
decision rule
$\hat\mu$ for estimating the unknown location parameter $\mu$ (in the
linear model \eqref{eq:LM1}, we have $\mu=X\beta$). Therefore we define a
loss function, which we write $L(\hat\mu,\mu)$, measuring the
discrepancy between $\hat\mu$ and $\mu$.
Typical examples are the quadratic loss
$ \|\hat\mu-\mu\|^2$ and the invariant (quadratic) loss
$ \|\hat\mu-\mu\|^2/\sigma^2$.
Since $L(\hat\mu,\mu)$ depends on unknown parameters $\mu$ or $(\mu,\sigma^2)$, it is
unknown as well and can thus be assessed through an estimation using
the observations (see for instance \citealp{fourdrinier2012improved},
and references therein for more details on loss estimation).

 {In this article, we only consider  unbiasedness as the notion of optimality. In such a case, unbiased estimators of the loss and unbiased estimators of the corresponding risk, defined by
\begin{equation}\label{eq:risk_def}
R(\hat\mu,\mu) := \mathbb{E}_{\mu}[L(\hat\mu,\mu)],
\end{equation}
are actually defined in the same way
\citep[see][]{stein1981estimation,johnstone1988inadmissibility}. In the sequel, we choose to refer to them as unbiased loss estimators.}

\begin{definition}[Unbiasedness] \label{esb}
Let $Y$ be a random vector in $\mathbb{R}^n$ with mean
$\mu\in\mathbb{R}^n$ and let $\hat\mu(Y)$ be any estimator of $\mu$. An estimator $\delta_0(Y)$ of the loss $L(\hat\mu(Y),\mu)$
is said to be
\emph{unbiased} if, for all $\mu\in\mathbb{R}^n$,  
\begin{displaymath}
  \mathbb{E}_{\mu}\bigl[\delta_0(Y)\bigr] = \mathbb{E}_{\mu}\bigl[L\bigl(\hat\mu(Y),\mu\bigr)\bigr], 
\end{displaymath}
where $\mathbb{E}_{\mu}$ denotes the expectation with respect to the  distribution of $Y$. 
\end{definition}
Obtaining an unbiased estimator of the quadratic loss
requires Stein's identity \citep[see][]{stein1981estimation} which states that,
if $Y \sim \mathcal{N}_n(\mu,\sigma^2I_n)$ and $g:\mathbb{R}^n \rightarrow \mathbb{R}^n$ is a weakly differentiable
  function,
then,       assuming the  expectations exist,
     \begin{equation}
    \label{eq:Stein-id}
    \mathbb{E}_\mu\left[(Y-\mu)^tg(Y)\right] =
    \sigma^2 \,  \mathbb{E}_\mu\left[{\rm div}_Y    g(Y)\right]  ,
  \end{equation}
  where ${\rm div}_Y g(Y) = \sum_{i=1}^n \partial g_i(Y) / \partial
  Y_i$ is the weak divergence of $g(Y)$ with respect to $Y$.
See {\em e.g.}  Section 2.3 in
  \cite{fourdrinier1995estimation} for the definition and the justification of weak differentiability.

 {When dealing with the invariant loss $\|X\hat{\beta}-X\beta\|^2/\sigma$, the need for estimating $\sigma^2$ leads to the following Stein-Haff identity
 \citep[Lemma 3.1 in][]{fourdrinier2012improved}:
     \begin{equation}
    \label{eq:Stein-id_inv}
    \mathbb{E}_{\mu,\sigma^2}\left[\frac{1}{\sigma^2} \; h(Y,S)\right] =
     \mathbb{E}_{\mu,\sigma^2} \! \left[ (n-p-2) S^{-1} h(Y,S) \; + \; 2 \frac{\partial}{\partial S} h(Y,S)      \right]  ,
  \end{equation}
where $S$  is a non negative random variable independent of $Y$ such that $S \sim \sigma^2 \chi_{n-p}^2$
and $h:\mathbb{R}^n \times \mathbb{R}^+ \rightarrow \mathbb{R}^n$ is a weakly differentiable function
with respect to $S$.  Note that a slightly different notion of unbiasedness will be used in the elliptical case.}

\subsection{Unbiased loss estimation for the linear regression}

 When considering the
Gaussian model in \eqref{eq:LM1},
we have $\mu=X\beta$, we set  $\hat\mu=X\hat\beta$ and $L(\hat\beta,\beta)$ is defined as the quadratic
loss $\|X\hat\beta-X\beta\|^2$.
Special focus will be given to the quadratic loss since it
is the most commonly used and allows tractable calculations.  In practice, it is a
reasonable choice if we are interested in both good selection and
good prediction at the same time. Moreover,  quadratic loss allows
us to link loss estimation with $C_p$ and AIC.

 In the following theorem,
 an unbiased estimator of the quadratic loss, under a Gaussian assumption,  is developed using a result of  \cite{stein1981estimation}.
\begin{thm}
\label{th_gauss}
  Let $Y \sim \mathcal{N}_n(X\beta,\sigma^2I_n)$.  
  Let $\hat\beta = \hat\beta(Y)$ be a function of the least squares estimator of $\beta$ such that $X\hat\beta$  is weakly
  differentiable with respect to $Y$.
  Let
   $\hat\sigma^2   = \|Y - X\hat\beta^{LS} \|^2/(n-p)$.
  Then
 \begin{equation}
   \label{eq:delta0}
    \delta_0(Y) = \|Y - X\hat\beta\|^2 + (2 \, {\rm div}_{Y }(X\hat\beta) - n)\hat\sigma^2
  \end{equation}
 is the usual  unbiased estimator of $\|X\hat\beta - X\beta \|^2$.

\end{thm}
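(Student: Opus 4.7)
The plan is to derive the unbiasedness by a standard bias-variance style decomposition, applying Stein's identity \eqref{eq:Stein-id} to the cross term, and then exploiting the independence of $\hat\sigma^2$ from $\hat\beta$ (since $\hat\beta$ is a function of $\hat\beta^{LS}$) to swap $\sigma^2$ for $\hat\sigma^2$.

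First I would write the target loss as
\begin{equation*}
  \|X\hat\beta - X\beta\|^2 \;=\; \|Y - X\hat\beta\|^2 \;-\; \|Y - X\beta\|^2 \;+\; 2\,(Y - X\beta)^t(X\hat\beta - X\beta),
\end{equation*}
obtained by inserting $\pm Y$ inside the norm on the left side and expanding. Taking expectations under $Y \sim \mathcal{N}_n(X\beta,\sigma^2 I_n)$, the second term gives $\mathbb{E}_\beta[\|Y - X\beta\|^2] = n\sigma^2$, and in the last term the contribution $-2\,\mathbb{E}_\beta[(Y-X\beta)^t X\beta]$ vanishes because $\mathbb{E}_\beta[Y-X\beta]=0$. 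For the remaining cross term $\mathbb{E}_\beta[(Y-X\beta)^t X\hat\beta(Y)]$, I would apply Stein's identity \eqref{eq:Stein-id} with $g(Y) = X\hat\beta(Y)$, which is legitimate by the assumed weak differentiability, yielding
\begin{equation*}
  \mathbb{E}_\beta\bigl[(Y-X\beta)^t X\hat\beta(Y)\bigr] \;=\; \sigma^2\,\mathbb{E}_\beta\bigl[\,{\rm div}_Y(X\hat\beta)\bigr].
\end{equation*}
Combining these gives $\mathbb{E}_\beta[\|X\hat\beta-X\beta\|^2] = \mathbb{E}_\beta[\|Y-X\hat\beta\|^2] + (2\,\mathbb{E}_\beta[{\rm div}_Y(X\hat\beta)] - n)\,\sigma^2$.

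Next, I would replace $\sigma^2$ by $\hat\sigma^2$. The key observation is that in the Gaussian linear model, $\hat\beta^{LS}$ and $\hat\sigma^2 = \|Y - X\hat\beta^{LS}\|^2/(n-p)$ are independent (the former lies in the column space of $X$, the latter is a function of the residual in the orthogonal complement). Since $\hat\beta$ is assumed to be a function of $\hat\beta^{LS}$, the random variable ${\rm div}_Y(X\hat\beta)$ depends on $Y$ only through $\hat\beta^{LS}$, hence is independent of $\hat\sigma^2$. Using this independence, together with $\mathbb{E}_\beta[\hat\sigma^2]=\sigma^2$, gives
\begin{equation*}
 \mathbb{E}_\beta\bigl[(2\,{\rm div}_Y(X\hat\beta) - n)\,\hat\sigma^2\bigr] \;=\; (2\,\mathbb{E}_\beta[{\rm div}_Y(X\hat\beta)] - n)\,\sigma^2,
\end{equation*}
so $\mathbb{E}_\beta[\delta_0(Y)] = \mathbb{E}_\beta[\|X\hat\beta - X\beta\|^2]$, which is the desired unbiasedness.

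The only subtle point is the swap of $\sigma^2$ for $\hat\sigma^2$, which is where the hypothesis ``$\hat\beta$ is a function of the least squares estimator'' plays its essential role: without this, ${\rm div}_Y(X\hat\beta)$ could involve the residual and the factorization of the expectation would fail. Everything else amounts to the bias-variance style expansion and a single application of the Stein identity \eqref{eq:Stein-id}.
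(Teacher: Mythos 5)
Your proof is correct and follows essentially the same route as the paper: the same quadratic decomposition of the loss around $Y$, a single application of Stein's identity \eqref{eq:Stein-id} to the cross term, and the independence of $\hat\sigma^2$ from functions of $\hat\beta^{LS}$ to replace $\sigma^2$ by $\hat\sigma^2$. If anything, you make the final independence step more explicit than the paper does, which is a welcome clarification rather than a deviation.
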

%
%
\begin{proof}
The  risk of  $X\hat\beta$ at $X\beta$ is
  \begin{eqnarray}
    \label{eq:risk}
    \!\!\!\!  \mathbb{E}_\beta[\|X\hat\beta - X\beta\|^2]  
    &=& \mathbb{E}_\beta[\|X\hat\beta -
    Y\|^2 +\| Y- X\beta \|^2]\\
    &&\qquad \qquad  +\mathbb{E}_\beta[ 2(Y-X\beta)^t(X\hat\beta-Y)]  . \nonumber
  \end{eqnarray}
Since $Y\sim
  \mathcal{N}_n(X \beta,\sigma^2I_n)$, we have
{  $\mathbb{E}_\beta[\| Y - X \beta \|^2] =  \mathbb{E}_\beta[(Y - X \beta)^tY] = n \, \sigma^2$
  leading to
  \begin{equation} \nonumber
    \mathbb{E}_\beta[\|X\hat\beta - X\beta\|^2]   \!=\!   \mathbb{E}_\beta[\|Y - X\hat\beta\|^2]   - n \, \sigma^2  +  2 \, \Tr({\rm cov}_\beta(X\hat\beta,Y-X\beta)).
  \end{equation}}
  Moreover, applying Stein's identity for the right-most part of the
  expectation in \eqref{eq:risk} with $g(Y) = X\hat\beta$ and assuming that $X\hat\beta$ is weakly
differentiable with respect to $Y$, we can rewrite \eqref{eq:risk} as
\begin{displaymath}
  \mathbb{E}_\beta[\|X\hat\beta - X\beta \|^2]  = \mathbb{E}_\beta[\|Y - X\hat\beta\|^2] -n \, \sigma^2 + 2\, \sigma^2 \, \mathbb{E}_\beta\!\left[{\rm div}_Y X\hat\beta\right]\!.
\end{displaymath}

{Since 
$\hat\sigma^2$ is an unbiased estimator of $\sigma^2$ 
independent of $\hat\beta^{LS}$, 
the right-hand side of this last equality is
also equal to the expectation of $\delta_0(Y)$ given by
Equation \eqref{eq:delta0}. Hence, according to Definition \ref{esb}, the statistic $\delta_0(Y)$ is  an unbiased estimator of $\|X\hat\beta - X\beta \|^2$.}
\end{proof}

 {\begin{remark}[Estimating the variance]
  The estimation of the variance is a crucial problem in model selection, as different variance estimators can lead to very different models selected. However, in this article, we do not intend to develop new model selection criteria, but only to review and extend existing ones. In particular, the restriction to unbiasedness makes the estimator $\hat{\sigma}^2 = \|Y- X\hat{\beta}^{LS}\|^2/(n-p)$ natural. For more details on the issue, we refer the interested reader to \cite{cherkassky2003comparison}.
\end{remark}}

\begin{remark}
It is often of interest to use robust estimators of $\beta$ and $\sigma^2$.
In such a case, the hypotheses of the theorem need to be modified to insure the independence between estimators $\hat\beta$ and $\hat\sigma^2$
which were implicit in the statement of the theorem.
We will see in Remark \ref{rem:df} of the next sections that, by the use of Stein identities for the spherical and elliptical cases, the implicit  assumption of independence is actually not needed.
\end{remark}

\subsection{ {Unbiased estimation of the invariant loss for  regression}}

 {When dealing with the invariant loss, it is natural to consider estimators of $\beta$ involving the sample variance  $S = \|Y - X\hat\beta^{LS} \|^2$, that is,
 $\hat\beta(Y,S)$.
However,
by consistency with the elliptical case tackled in Section 4, we consider estimators 
that only depend on the least squares estimator and no longer on $S$.
Therefore, we give the following adaptation of Theorem 3.1 from \cite{fourdrinier2012improved}.

\begin{thm}
\label{th_gauss_inv}
  Let $Y \sim \mathcal{N}_n(X\beta,\sigma^2I_n)$ and $n \geq 5$.  
  Let $\hat\beta = \hat\beta(Y)$ be an estimator of $\beta$  weakly differentiable with respect to $Y$ and independent of
 $\|Y - X\hat\beta^{LS} \|^2$.
Then
 \begin{equation}
   \label{eq:delta0_inv}
    \delta_0^{\mbox{\scriptsize inv}}(Y) = \frac{n-p-2}{ \|Y - X\hat\beta^{LS} \|^2}\|Y - X\hat\beta\|^2 + 2 \, {\rm div}_{Y }(X\hat\beta) - n
  \end{equation}
 is an  unbiased estimator of the invariant loss $\|X\hat\beta - X\beta \|^2/\sigma^2$.
\end{thm}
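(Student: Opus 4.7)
The plan is to leverage Theorem \ref{th_gauss} (or really its proof) together with the Stein-Haff identity \eqref{eq:Stein-id_inv}, using the fact that $S = \|Y - X\hat\beta^{LS}\|^2 \sim \sigma^2 \chi^2_{n-p}$ is independent of $\hat\beta^{LS}$ under the Gaussian model, hence independent of $\hat\beta$ by assumption.

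First, from the proof of Theorem \ref{th_gauss} we already have the identity
\begin{displaymath}
  \mathbb{E}_\beta\bigl[\|X\hat\beta - X\beta\|^2\bigr]
  = \mathbb{E}_\beta\bigl[\|Y - X\hat\beta\|^2\bigr] - n\,\sigma^2 + 2\,\sigma^2\,\mathbb{E}_\beta\bigl[{\rm div}_Y(X\hat\beta)\bigr].
\end{displaymath}
Dividing through by $\sigma^2$ gives a decomposition of the risk under the invariant loss into three pieces: two of them, namely $-n$ and $2\,\mathbb{E}_\beta[{\rm div}_Y(X\hat\beta)]$, already appear unscaled and can be taken as is. The remaining piece, $\sigma^{-2}\,\mathbb{E}_\beta[\|Y - X\hat\beta\|^2]$, is the only one requiring work.

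Next, I would apply the Stein-Haff identity \eqref{eq:Stein-id_inv} to the function $h(Y,S) = \|Y - X\hat\beta(Y)\|^2$. By hypothesis, $\hat\beta$ depends on $Y$ only, not on $S$, so $\partial h/\partial S \equiv 0$ and the identity collapses to
\begin{displaymath}
  \frac{1}{\sigma^2}\,\mathbb{E}_{\beta,\sigma^2}\bigl[\|Y - X\hat\beta\|^2\bigr]
  = (n-p-2)\,\mathbb{E}_{\beta,\sigma^2}\!\left[\frac{\|Y - X\hat\beta\|^2}{S}\right] ,
\end{displaymath}
where the condition $n-p-2 > 0$ (implied by $n \geq 5$ together with the usual ranges of $p$) ensures the integrability of $1/S$. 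Substituting this back into the divided-through risk identity and identifying $S$ with $\|Y - X\hat\beta^{LS}\|^2$ produces precisely the expression for $\delta_0^{\mbox{\scriptsize inv}}(Y)$ in \eqref{eq:delta0_inv} inside the expectation, yielding the claimed unbiasedness.

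The main subtlety, rather than an obstacle, is the legitimacy of applying the Stein-Haff identity with $h(Y,S) = \|Y - X\hat\beta(Y)\|^2$: one has to check that $h$ is weakly differentiable in $S$ (trivially so here, since it is constant in $S$) and that all expectations are finite. The independence of $\hat\beta$ and $S$, inherited from the assumption that $\hat\beta$ is independent of $\|Y - X\hat\beta^{LS}\|^2$, is what permits this clean reduction and replaces the ad hoc use of an unbiased estimator of $\sigma^2$ that appeared in Theorem \ref{th_gauss}.
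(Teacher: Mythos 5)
Your overall strategy --- divide the risk identity from the proof of Theorem \ref{th_gauss} by $\sigma^2$, then use the Stein--Haff identity \eqref{eq:Stein-id_inv} to convert $\sigma^{-2}\,\mathbb{E}_\beta[\|Y-X\hat\beta\|^2]$ into an expectation involving $S^{-1}$ --- is the right one, and it is essentially how the paper intends Theorem \ref{th_gauss_inv} to be obtained (the paper gives no proof, only a pointer to Theorem 3.1 of \cite{fourdrinier2012improved}; the analogous computation appears in Section 4 for the elliptical case). However, the step where you set $\partial h/\partial S\equiv 0$ is wrong, and it is exactly the delicate point. The identity \eqref{eq:Stein-id_inv} requires $S$ to be independent of the other argument of $h$, so it must be applied to the independent pair $(\hat\beta^{LS},S)$ --- equivalently $(Z,S)=(Q_1^tY,\|U\|^2)$ in the canonical form --- and not to $(Y,S)$, since the full vector $Y$ determines $S$. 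Written as a function of that pair, your $h$ is
\[
\|Y-X\hat\beta\|^2=\|Y-X\hat\beta^{LS}\|^2+\|X\hat\beta^{LS}-X\hat\beta\|^2=S+\|X\hat\beta^{LS}-X\hat\beta\|^2
\]
by Pythagoras ($Y-X\hat\beta^{LS}$ is orthogonal to the column space of $X$, which contains $X\hat\beta^{LS}-X\hat\beta$). Hence $\partial h/\partial S=1$, not $0$: although $X\hat\beta$ does not depend on $S$, the residual $Y-X\hat\beta$ does, through the component of $Y$ orthogonal to the column space of $X$. The Stein--Haff identity therefore gives
\[
\frac{1}{\sigma^2}\,\mathbb{E}_{\beta,\sigma^2}\bigl[\|Y-X\hat\beta\|^2\bigr]=(n-p-2)\,\mathbb{E}_{\beta,\sigma^2}\!\left[\frac{\|Y-X\hat\beta\|^2}{S}\right]+2\,.
\]

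Carrying the extra $+2$ through your argument shows that the unbiased estimator of $\|X\hat\beta-X\beta\|^2/\sigma^2$ is $\delta_0^{\mbox{\scriptsize inv}}(Y)+2$, i.e.\ the additive constant in \eqref{eq:delta0_inv} should be $-(n-2)$ rather than $-n$. That this is not a formal quibble can be checked directly with $\hat\beta=\hat\beta^{LS}$, which satisfies all the hypotheses: then $\delta_0^{\mbox{\scriptsize inv}}(Y)=(n-p-2)+2p-n=p-2$ identically, whereas $\|X\hat\beta^{LS}-X\beta\|^2/\sigma^2\sim\chi^2_p$ has expectation $p$. (The same check confirms the corrected constant, and it matches the modified $C_p$ of Davies et al., which carries a $+2$.) So your derivation reproduces the displayed formula only because the dropped derivative term happens to equal the constant missing from the statement; the analogous oversight occurs in the paper's Section 4 proof, where $T(\bZ,S)=(\bY-X\hbbeta)^t(\bY-X\hbbeta)$ is treated as independent of $S$. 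The repair to your proof is one line --- include the term $2\,\partial h/\partial S=2$ --- but it changes the conclusion.
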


Note that, for more general estimators of the form $\hat\beta(Y,S)$,
a correction term has to be added to    \eqref{eq:delta0_inv}.
Thus, if $\hat\beta(Y,S) = \hat\beta^{LS}(Y) + g(\hat\beta^{LS},S)$ for some function $g$, this correction is $4(X\hat\beta(Y,S) - Y)^tX \frac{\partial g(\hat\beta^{LS},S)}{\partial S} $.}

\subsection{Links between loss estimation and model selection}
\label{sec:link}

In order to make the following discussion clearer, we recall here the
formulas of the three criteria of interest for the
Gaussian assumption.
First the historical criterion Mallows' $C_p$ and the extended version
of AIC proposed by \cite{ye1998measuring}:
$$
C_p(\hat\beta)  =  \frac{\|Y - X\hat\beta\|^2\!\!\!}{\hat\sigma^2} \; + 2 \,   \widehat{df} - n
$$
and
$$
\text{AIC}(\hat\beta)  =
\frac{\|Y - X\hat\beta\|^2\!\!\!}{\hat\sigma^2} \; + 2  \,   {\rm div}_Y (X\hat\beta) .
$$
Secondly  the unbiased estimator of loss $\delta_0(Y)$:
$$
  \delta_0(\hat\beta)  =  \|Y - X\hat\beta\|^2 + (2 \,  {\rm div}_Y (X\hat\beta) - n)\hat\sigma^2.
$$
We have the following link between $\delta_0$, $C_p$ and AIC:
\begin{equation}
  \label{eq:d0_Cp_AIC}
  \delta_0(\hat\beta)={\hat\sigma^2} \times C_p(\hat\beta)={\hat\sigma^2} \times (\text{AIC}(\hat\beta)-n)
\end{equation}
since the statistic ${\rm div}_Y (X\hat\beta)$ is as an unbiased estimator of the (generalized)  degrees of freedom \citep{ye1998measuring}.

These links between different criteria  for model selection are due to the fact that, under our working hypothesis
(linear model, quadratic loss, normal distribution $Y \sim \mathcal{N}_n(X\beta,\sigma^2I_n)$ for a fixed design matrix $X$),
they can be seen as unbiased estimators of related quantities of
interest. 
{Note that there is also an equivalence with other model
  selection criteria, such as those investigated in \cite{li1985stein}, \cite{shao1997asymptotic} 
and \cite{efron2004estimation}. 
}

The final objective is to select the ``best'' model among those at
hand. This can be performed by minimizing either of the three proposed
criteria, that is the unbiased estimator of loss $\delta_0$, $C_p$ and
AIC. The idea behind this  heuristic, as shown in the previous section,
is that the best model in terms
of prediction is the one minimizing the estimated loss. 
Now, from Equation~\eqref{eq:d0_Cp_AIC}, it can be easily seen that the
three criteria
differ from each other only up to a multiplicative and/or additive constant. Hence the models
selected by the three criteria will be the same.

It is important to note that Theorem \ref{th_gauss} does not
{rely on the linearity of the link between $X$ and
  $Y$ so that this work can easily be extended to nonlinear links
.}
 Therefore $\delta_0$  generalizes $C_p$ to nonlinear models.
Moreover,  
 following its definition \eqref{eq:AIC}, AIC implementation requires the specification of  the underlying distribution. 
  In this sense it is considered as a generalization of $C_p$ for
  non Gaussian distributions. However, in practice, we might only have
  a vague intuition of nature of the underlying distribution and we might not be
  able to give its specific form.
We will see in the following section that $\delta_0$, which is
equivalent to the Gaussian AIC as we have just seen, can
  be also derived from a more general distribution context, that of
  spherically symmetric distributions, with no need to specify the
 precise form of the distribution.

 {Note that  $C_p$ and $AIC$ are developed first fixing $\sigma^2$ and then estimating it by  $\hat\sigma^2$,
while, for $\delta_0$, the estimation of   $\sigma^2$ is integrated to the construction process.
It is then natural  to gather the evaluation of $\beta$ and $\sigma^2$ estimating  the invariant loss
$$ \frac{\|X\beta - X\hat\beta \|^2\!\!\!}{\sigma^2} \; , $$
for which $\delta_0^{\mbox{\scriptsize inv}}(\hat\beta)$ in an unbiased estimator.
Note that $\delta_0^{\mbox{\scriptsize inv}}(\hat\beta)$ involves the  variance estimator $ \|Y - X\hat\beta^{LS}\|^2/(n-p-2)$
instead of $ \|Y - X\hat\beta^{LS}\|^2/(n-p)$. This alternative variance estimator was also considered in the unknown variance setting for the construction of the modified $C_p$, which is actually equivalent to $\delta_0^{\mbox{\scriptsize inv}}(\hat\beta)$, and the corrected AIC \citep[see][and references therein]{davies2006estimation}.}

\enlargethispage{.1cm}

\section{Unbiased loss estimators for spherically symmetric distributions}
\label{sec:extension}

\subsection{Multivariate spherical distributions}
\label{sec:spher-sym}

In the previous section, results were given under the Gaussian
assumption with covariance matrix $\sigma^2I_n$. In this section, we
extend this distributional framework. 

\vspace{5pt}
The characterization of the normal distribution as expressed by
\cite{kariya1989robustness} allows two directions for
generalization. Indeed, the authors assert that a random vector is
Gaussian, with covariance matrix proportional to the identity matrix, if and only if its components are independent and its law is
spherically symmetric. Hence, we can generalize the Gaussian
assumption by either keeping the independence property and
consider other laws than the Gaussian, or by relaxing the independence
assumption to the benefit of spherical symmetry. 
In the same spirit,
\cite{fan1985inadmissibility} pointed out that there are two main
generalizations of the Gaussian assumption in the literature: one
generalization comes from the interesting properties of
the exponential form and leads to the exponential family of distributions,
while the other is based on the invariance under orthogonal
transformation and results in the family of spherically symmetric
distributions (which can be generalized by elliptically
contoured distributions). These generalizations are complementary but go in different
directions and have lead to fruitful lines of research. Note that their
only common member is the Gaussian distribution.
The main interest of choosing spherical distributions is that
the conjunction of invariance under orthogonal transformation
together with linear regression with less variables than observations 
brings robustness.
The interest of that property is illustrated by the fact that
some statistical tests designed under a Gaussian assumption, such as the
Student and Fisher tests,
remain valid for spherical distributions
\cite{wang2002power,fang1990statistical}.
This robustness property is not shared by independent non-Gaussian distributions, as mentioned in \cite{kariya1989robustness}.

 {To be self contained, we recall the definition of spherically symmetric distributions.
\begin{definition}
\label{def:SSD}
A random vector $\varepsilon$  is said to be spherically symmetric if,
for any {orthogonal} matrix $Q$,
the law of $Q\varepsilon$ is the same as the one of $\varepsilon$.
\end{definition}
Note that any random vector $\varepsilon$ in Definition  \ref{def:SSD} is necessarily centered at zero.}
Then, for any fixed vector $\mu$, the distribution of $\varepsilon + \mu$ will be said spherically symmetric around $\mu$.

Thus, from the model in \eqref{eq:LM1}, the distribution of $Y$ is the
distribution of $\sigma\,\varepsilon$ translated by $\mu=X\beta$: $Y$ has a spherically symmetric distribution about the location parameter $\mu$ with covariance matrix equal to $n^{-1}\sigma^2\esp[\|\varepsilon\|^2] I_n$ 
where $I_n$ is the identity matrix.
We will write $\varepsilon\sim \mathcal{S}_n(0,I_n)$ and $Y\sim
\mathcal{S}_n(\mu,\sigma^2I_n)$. Examples of this family besides  the Gaussian distribution  are  the Student distribution, the Kotz distribution, or any  variance mixtures of  Gaussian distributions. 

As we will see in the sequel,
the unbiased estimator of the quadratic loss $\delta_0$ \eqref{eq:d0} remains unbiased for any of these
distributions with no need to specify its particular form.
It thus brings distributional robustness.
For more
details and examples of the spherical family, we refer
the interested reader to
{\cite{chmielewski1981elliptically}} for a historical review and
\cite{fang1990generalized} for a comprehensive presentation.

\subsection{The canonical form of the linear regression model}
\label{sec:cf}


An efficient way of dealing with the linear regression model under spherically symmetric distributions
is to use its canonical form  (see \cite{fourdrinier1995estimation} for details).
This form will allow us to give more straightforward proofs.

Considering model \eqref{eq:LM1}, the canonical form consists in an
orthogonal transformation of $Y$.
Using partitioned matrices, let $Q = (Q _1 \; Q_2)$ be an $n \times n$ orthogonal matrix
partitioned such that the first $p$ columns of $Q$
({\em i.e.} the columns of $Q_1$) 
span the column space of $X$.
For instance, this is the case of the $Q$-$R$ factorization
where $X = QR$ with $R$  an $n \times p$ upper triangular matrix.
Now, according to  \eqref{eq:LM1}, let
\begin{equation}  \label{eq:canoZ_spheric}
Q^t Y =
{\begin{pmatrix}
Q_1^t \\
Q_2^t
\end{pmatrix}
}Y =
\begin{pmatrix}
Q_1^t \\
Q_2^t
\end{pmatrix}
X \beta + \sigma Q^t \varepsilon =
\begin{pmatrix}
\theta \\
\mathbf{0}
\end{pmatrix}
 + \sigma Q^t \varepsilon
\end{equation}
with $\theta = Q_1^t X \beta$ and $Q_2^t X \beta = \mathbf{0}$ since the columns of $Q_2$ are orthogonal
to those of $X$. It follows from the definition that $(Z^t \, U^t)^t := Q^t Y$ has a spherically
symmetric distribution about $(\theta^t \, \mathbf{0}^t)^t$. In this sense, the model
\begin{displaymath}
\begin{pmatrix}
Z \\
U
\end{pmatrix}
=
\begin{pmatrix}
\theta \\
\mathbf{0}
\end{pmatrix}
+
\sigma
\begin{pmatrix}
Q_1^t \varepsilon\\
Q_2^t \varepsilon
\end{pmatrix}
\end{displaymath}
is the canonical form of the  linear regression model (\ref{eq:LM1}).

This canonical form has been considered by various authors such as
\cite{CellierFourdrinierRobert1989JMVA,CellierFourdrinier1995JMVA,Maruyama2003SandD, MaruyamaStrawderman2009JMVA,FourdrinierStrawderman2010IMS,
KubokawaSrivastava1999AS}. \cite{KubokawaSrivastava2001JMVA} addressed the multivariate case where $\theta$ is a
mean matrix (in this case $Z$ and $U$ are matrices as well) we will introduce section \ref{sec:cf-elliptic}.

For any estimator $\hat\beta$,  the orthogonality of $Q$ implies that
\begin{equation}\label{eq:decompcannonMC}
\begin{array}{lll}
 \|Y - X\hat\beta\|^2 
           & =&  \| \hat\theta -  Z\|^2 + \| U \|^2
\end{array}
\end{equation}
where  $\hat\theta=Q_1^t X \hat\beta$ is the corresponding estimator
of $\theta$. In particular,
for the least squares estimator $\hat\beta^{LS}$,  we have
\begin{equation}
\|Y - X\hat\beta^{LS} \|^2 =  \| U \|^2 .\label{eq:LS_U}
\end{equation}

In that context,  we recall the Stein-type identity
given by  \cite{fourdrinier1995estimation}.
\begin{thm}[Stein-type identity]\label{stein-type}
  Given $(Z,U)\in\mathbb{R}^n$ a random vector following a spherically
  symmetric distribution around $(\theta,\mathbf{0})$, and
  $g:\mathbb{R}^p \rightarrow \mathbb{R}^p$ a weakly differentiable
  function, we have
  \begin{eqnarray}\label{eq:steinSS}
    \mathbb{E}_\theta[(Z-\theta)^tg(Z)] = \mathbb{E}_\theta[\|U\|^2{\rm div}_Zg(Z)/(n-p)],
  \end{eqnarray}
  provided both expectations exist.
\end{thm}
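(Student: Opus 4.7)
My plan is to prove the identity by conditioning on the radial variable $R := \sqrt{\|Z-\theta\|^2 + \|U\|^2}$, reducing matters to the uniform distribution on a sphere in $\mathbb{R}^n$, and then applying a direct integration by parts on a ball in $\mathbb{R}^p$. Translating by $\theta$ is harmless: setting $W := Z - \theta$ and $\phi(w) := g(w + \theta)$, the target identity becomes
\begin{equation*}
\mathbb{E}\bigl[W^t \phi(W)\bigr] \;=\; \frac{1}{n-p}\,\mathbb{E}\bigl[\|U\|^2 \,{\rm div}\,\phi(W)\bigr].
\end{equation*}

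A standard characterization of spherical laws states that, conditionally on $R = r$, the pair $(W, U)$ is uniformly distributed on $rS^{n-1} \subset \mathbb{R}^p \times \mathbb{R}^{n-p}$. By iterated expectation it is therefore enough to establish the identity, for each fixed $r > 0$, under this uniform law. Two classical facts about the uniform distribution on $rS^{n-1}$ now reduce the identity further: the marginal density of $W$ on the open ball $B_r := \{w \in \mathbb{R}^p : \|w\| < r\}$ is proportional to $(r^2 - \|w\|^2)^{(n-p-2)/2}$; and, conditionally on $W = w$, the vector $U$ is uniform on the sphere of radius $\sqrt{r^2 - \|w\|^2}$ in $\mathbb{R}^{n-p}$, so that $\mathbb{E}[\|U\|^2 \mid W = w] = r^2 - \|w\|^2$. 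Substituting and cancelling the common normalisation constant reduces the target to
\begin{equation*}
\int_{B_r} w^t \phi(w) \,(r^2 - \|w\|^2)^{(n-p-2)/2}\,dw \;=\; \frac{1}{n-p}\int_{B_r} {\rm div}\,\phi(w) \,(r^2 - \|w\|^2)^{(n-p)/2}\,dw.
\end{equation*}

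The final step is an integration by parts on $B_r$, resting on the key observation
\begin{equation*}
w_i \,(r^2 - \|w\|^2)^{(n-p-2)/2} \;=\; -\frac{1}{n-p}\,\frac{\partial}{\partial w_i}\Bigl[(r^2 - \|w\|^2)^{(n-p)/2}\Bigr].
\end{equation*}
Multiplying by $\phi_i(w)$, summing over $i$, and applying the divergence theorem on $B_r$ yields the desired equality, with no boundary contribution since the kernel $(r^2 - \|w\|^2)^{(n-p)/2}$ vanishes on $\partial B_r$.

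The main obstacle will be the technical handling of the weak-differentiability hypothesis on $g$: integration by parts against a merely weakly differentiable $\phi$ calls for a mollification on the bounded ball $B_r$ and a limit passage, and the interchange between the $r$-averaging and the integration on $B_r$ must be justified by Fubini or dominated convergence. The assumption that both expectations exist in the statement of the theorem supplies the integrability needed to push this through.
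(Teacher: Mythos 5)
The paper states this theorem without proof, citing Fourdrinier and Wells (1995); your argument is correct and is essentially the standard one given there: conditioning on the radius reduces matters to the uniform law on a sphere, whose $p$-dimensional marginal has density proportional to $(r^2-\|w\|^2)^{(n-p-2)/2}$ with $\|U\|^2 = r^2-\|W\|^2$ deterministically, and the identity then follows by integration by parts against the kernel $(r^2-\|w\|^2)^{(n-p)/2}$, which vanishes on the boundary of the ball. The technical points you flag at the end (mollification to handle weak differentiability, Fubini to interchange the radial mixing with the integration over $B_r$) are precisely the ones treated in the cited reference, so there is no gap.
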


Note that the divergence in Theorem \ref{th_gauss} is taken with respect to $Y$ while  the Stein type identity  (\ref{eq:steinSS})
requires the divergence with respect to $Z$
(with the assumption of weak differentiability).
Their relationship can be seen in the following lemma.
\begin{lem}\label{lem:divergence}
We have
\begin{equation} \label{eq:traces}
{\rm div}_Y   X \hat\beta(Y) =  {\rm div}_Z   \hat\theta(Z,U) \, .
  \end{equation}
\end{lem}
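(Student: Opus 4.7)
The plan is to exploit the relation $X\hat\beta(Y) = Q_1 \hat\theta(Z,U)$ and compute both divergences directly via the chain rule. Since the columns of $Q_1$ span the column space of $X$, the estimator $X\hat\beta$ lies in that column space, so $X\hat\beta = Q_1 Q_1^t (X\hat\beta) = Q_1 \hat\theta$, where $\hat\theta = Q_1^t X\hat\beta$ is precisely the estimator introduced in the canonical form. Moreover, $Z = Q_1^t Y$ and $U = Q_2^t Y$, so $(Z,U)$ is a smooth (in fact linear and orthogonal) function of $Y$, which will make the application of the chain rule to $\hat\theta(Z,U)$ legitimate under the weak differentiability assumption.

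Next, I would compute the Jacobian of $Y \mapsto X\hat\beta(Y) = Q_1 \hat\theta(Q_1^t Y, Q_2^t Y)$ via the chain rule:
$$\frac{\partial (X\hat\beta)}{\partial Y} \;=\; Q_1 \left[\frac{\partial \hat\theta}{\partial Z}\, Q_1^t + \frac{\partial \hat\theta}{\partial U}\, Q_2^t\right].$$
Taking the trace and using its cyclic property yields
$$\mathrm{div}_Y X\hat\beta \;=\; \mathrm{tr}\!\left(Q_1^t Q_1\, \frac{\partial \hat\theta}{\partial Z}\right) \;+\; \mathrm{tr}\!\left(Q_2^t Q_1\, \frac{\partial \hat\theta}{\partial U}\right).$$
The orthogonality of $Q$ gives $Q_1^t Q_1 = I_p$ and $Q_2^t Q_1 = 0$, so the first term reduces to $\mathrm{tr}(\partial \hat\theta / \partial Z) = \mathrm{div}_Z \hat\theta(Z,U)$ while the second term vanishes, producing the claimed identity.

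There is no real obstacle here, only bookkeeping: one must be careful that $\mathrm{div}_Z \hat\theta(Z,U)$ is understood as the divergence of the partial map $Z \mapsto \hat\theta(Z,U)$ with $U$ held fixed, which is exactly what the chain-rule calculation above produces. The only delicate point is the justification of the chain rule at the level of weak derivatives, but since $(Z,U) = Q^t Y$ is an invertible linear change of variables, the weak differentiability of $X\hat\beta$ with respect to $Y$ is equivalent to the weak differentiability of $\hat\theta$ with respect to $(Z,U)$, and standard results permit exchanging the chain rule with the weak derivative in this setting.
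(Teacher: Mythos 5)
Your proof is correct and follows essentially the same route as the paper: both arguments apply the chain rule to the orthogonal change of variables $W = Q^t Y$ and then use the trace's invariance (cyclicity) together with $Q_1^t Q_1 = I_p$ and $Q_2^t Q_1 = 0$ to isolate ${\rm tr}(J_{\hat\theta}(Z))$. The paper phrases this via the block structure of the Jacobian of $\widehat{T}(W) = Q^t X\hat\beta(QW)$ rather than via the identity $X\hat\beta = Q_1\hat\theta$, but the computation is the same.
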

\begin{proof}
Denoting by ${\rm tr}(A) $ the trace of any matrix $A$  and by
$J_f(x)$ the Jacobian matrix  (when it exists)
of a function $f$ at $x$, we have
\begin{displaymath}
     {\rm div}_Y   X \hat\beta(Y)  =
     {\rm tr}\bigl(J_{X \hat\beta}(Y)\bigl) =  {\rm tr}\bigl(Q^t  \, J_{X \hat\beta}(Y) \, Q\bigl)
  \end{displaymath}
by definition of the divergence and since $Q^t$ is an orthogonal matrix.
  Now, setting $W = Q^t \, Y$, {\em i.e.} $Y = Q \, W$, applying the chain rule to the function
\[
 \widehat{T}(W) = Q^t X \hat\beta(Q \, W)
\]
gives rise to
\begin{equation} \label{eq:traces1}
 J_{ \widehat{T}}(W) = J_{Q^t X \hat\beta}(Y) \, Q = Q^t  \,  J_{X \hat\beta}(Y) \, Q ,
  \end{equation}
noticing that $Q^t$ is a linear transformation.
   Also, as according to (\ref{eq:canoZ_spheric})
\[
 W =
 \begin{pmatrix}
Z \\
U
\end{pmatrix}
 \quad \mbox{and} \quad
 \widehat{T}  = \left(\begin{array}{c} \hat\theta\\\mathbf{0} \end{array}\right) ,
\]
the following decomposition holds
\begin{eqnarray*}
 J_{ \widehat{T}}(W) \! &=& \!
   \begin{pmatrix}
   J_{\hat{\theta}}(Z)  & J_{\hat{\theta}}(U) \\
            \mathbf{0}     &          \mathbf{0}
   \end{pmatrix}   ,
\end{eqnarray*}
where $J_{\hat{\theta}}(Z)$ and $J_{\hat{\theta}}(U)$ are the parts of the Jacobian matrix
in which the derivatives are taken with respect to the components of $Z$ and $U$ respectively.
Thus
\begin{equation} \label{eq:traces2}
{\rm tr}\bigl( J_{ \widehat{T}}(W)\bigr) = {\rm tr}\bigl( J_{ \hat{\theta}}(Z)\bigr)
  \end{equation}
and, therefore, gathering \eqref{eq:traces1}  and  \eqref{eq:traces2},  we obtain
\begin{displaymath}
{\rm tr}\bigl(J_{\hat{\theta}}(Z)\bigr)  =
{\rm tr}\bigl(Q^t \, J_{X \hat\beta}(Y) \, Q\bigr) =
{\rm tr}\bigl(Q Q^t \, J_{X \hat\beta}(Y)\bigr) =
{\rm tr}\bigl(J_{X \hat\beta}(Y)\bigl) ,
  \end{displaymath}
which is  \eqref{eq:traces} by definition of the divergence.
\end{proof}

\subsection{Unbiased estimator of loss for the spherical case}
\label{sec:unbiased-spher}

This section develops a generalization of Theorem
\ref{th_gauss} to the class of spherically symmetric distributions $Y \sim \mathcal{S}_n(X\beta,\sigma^2)$,
given by Theorem \ref{th_spher}.  To do so we need to consider the statistic
\begin{equation}
   \label{eq:VarEstimSansBiais}
\hat\sigma^2(Y)  = \tfrac{1}{n-p}\|Y - X\hat\beta^{LS} \|^2  .
\end{equation}
It is an unbiased estimator of $\sigma^2 \mathbb{E}_\beta  [{\|\varepsilon\|^2}/{n}] $.
Note that, in the normal case where
$Y \sim\mathcal{N}_n(X\beta,\sigma^2I_n)$,
we have $ \mathbb{E}_\beta  [{\|\varepsilon\|^2}/{n}] = 1$ so that $\hat\sigma^2(Y)$ is an unbiased estimator of $\sigma^2$.

\begin{thm}[Unbiased estimator of the quadratic loss under a spherical assumption]\label{th_spher}
   Let $Y \sim\mathcal{S}_n(X\beta,\sigma^2I_n)$ and let $\hat\beta = \hat\beta(Y)$ be
an estimator of $\beta$  depending only on $Q_1^t Y$. If $\hat\beta(Y)$ is
weakly differentiable with respect to $Y$, then the statistic
  \begin{equation}
   \label{eq:d0}
    \delta_0(\hat\beta) =
    \|Y - X\hat\beta(Y)\|^2 + (2 \, {\rm div}_{Y }(X\hat\beta(Y)) - n) \, \hat\sigma^2(Y)
    \, ,
  \end{equation}
is an unbiased estimator of $\|X\hat\beta(Y) - X\beta \|^2$.
 \end{thm}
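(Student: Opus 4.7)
The plan is to reduce the problem to the canonical form of Section~\ref{sec:cf} and then to apply the Stein-type identity of Theorem~\ref{stein-type} to two carefully chosen functions of $Z$. Three ingredients already stated in the excerpt will do most of the work: the decomposition \eqref{eq:decompcannonMC} of $\|Y - X\hat\beta\|^2$, the identification \eqref{eq:LS_U} that rewrites $\hat\sigma^2(Y)$ as $\|U\|^2/(n-p)$, and the equality of divergences ${\rm div}_Y X\hat\beta = {\rm div}_Z \hat\theta$ from Lemma~\ref{lem:divergence}. Moreover, since $X\hat\beta - X\beta$ lies in the column space of $X$ and is therefore orthogonal to the columns of $Q_2$, the loss itself satisfies $\|X\hat\beta - X\beta\|^2 = \|\hat\theta(Z) - \theta\|^2$, where $\hat\theta(Z) = Q_1^t X\hat\beta(Y)$ is, by the hypothesis of the theorem, a function of $Z$ alone.

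Substituting these identities into \eqref{eq:d0} and collecting the two $\|U\|^2$-terms, I would rewrite
$$
\delta_0(\hat\beta) = \|\hat\theta(Z) - Z\|^2 + \frac{\|U\|^2}{n-p}\bigl(2\,{\rm div}_Z \hat\theta(Z) - p\bigr).
$$
Next, expanding the loss as
$$
\|\hat\theta(Z) - \theta\|^2 = \|\hat\theta(Z) - Z\|^2 + \|Z - \theta\|^2 - 2(Z - \theta)^t\bigl(Z - \hat\theta(Z)\bigr)
$$
and taking expectations, the Stein-type identity \eqref{eq:steinSS} applied first with $g(Z) = Z - \theta$ and then with $g(Z) = Z - \hat\theta(Z)$ converts the last two terms into $p\,\mathbb{E}_\theta[\|U\|^2]/(n-p)$ and $\mathbb{E}_\theta[\|U\|^2(p - {\rm div}_Z \hat\theta(Z))]/(n-p)$ respectively. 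The bookkeeping $p - 2(p - {\rm div}_Z \hat\theta) = 2\,{\rm div}_Z \hat\theta - p$ then matches $\mathbb{E}_\theta[\|\hat\theta(Z) - \theta\|^2]$ with $\mathbb{E}_\theta[\delta_0(\hat\beta)]$, which is the claim.

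The main obstacle is conceptual and is really confined to the set-up: checking that the weak differentiability of $X\hat\beta$ with respect to $Y$, together with the hypothesis that $\hat\beta$ depends only on $Q_1^t Y = Z$, is sufficient for $\hat\theta$ to be weakly differentiable as a function of $Z$, so that Theorem~\ref{stein-type} genuinely applies. Without the restriction that $\hat\beta$ is a function of $Z$ only, one would need to differentiate with respect to $U$ as well and the clean separation afforded by the Stein-type identity would break down. Once this point is settled, the rest is just algebra and, crucially, no independence between $\hat\beta$ and $\hat\sigma^2$ is required, since the factor $\|U\|^2/(n-p)$ produced by the spherical Stein identity plays the role of $\hat\sigma^2$ automatically.
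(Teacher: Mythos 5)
Your argument is correct and is essentially the paper's own proof: both reduce the problem to the canonical form, expand the quadratic loss, and dispose of the cross term via the Stein-type identity of Theorem~\ref{stein-type} together with the divergence equality of Lemma~\ref{lem:divergence}. The only difference is bookkeeping --- you transport both the loss and $\delta_0$ to the $(Z,U)$ coordinates first and invoke the identity twice (with $g(Z)=Z-\theta$ and $g(Z)=Z-\hat\theta(Z)$), whereas the paper expands $\|X\hat\beta-X\beta\|^2$ around $Y$, handles $\mathbb{E}[\|Y-X\beta\|^2]=n\,\mathbb{E}[\hat\sigma^2(Y)]$ directly from the definition of $\hat\sigma^2$, and applies the identity once to $g(Z)=\hat\theta-Z$.
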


\begin{proof} 
The  quadratic loss function of  $X\hat\beta$ at $X\beta$ can be decomposed as
  \begin{equation}\label{LossDecomp}
 \|X\hat\beta - X \beta\|^2 =  \|Y - X\hat\beta\|^2 +  \|Y - X \beta\|^2 +
 2 \, (X \hat\beta - Y)^t (Y - X \beta) \, .
  \end{equation}
An unbiased estimator of the second term in the right hand side of (\ref{LossDecomp}) has been
considered in (\ref{eq:VarEstimSansBiais}). As for the third term, by orthogonal invariance of the inner
product,
  \begin{eqnarray}\label{cross-product-term}
   (X  \hat\beta - Y)^t (Y - X \beta) &=& (Q^t X  \hat\beta - Q^t  Y)^t (Q^t  Y - Q^t  X \beta)
   \nonumber \\
   &=&
  \begin{pmatrix}
 Q_1^t X  \hat\beta - Q_1^t Y \\
 Q_2^t X  \hat\beta - Q_2^t Y
  \end{pmatrix}
 ^{t} %
  \begin{pmatrix}
 Q_1^t Y - Q_1^t  X  \beta \\
 Q_2^t Y - Q_2^t X  \beta
  \end{pmatrix} \nonumber \\
    &=&
   \begin{pmatrix}
 \hat\theta - Z \\
 - U
  \end{pmatrix}
 ^{t} %
  \begin{pmatrix}
 Z - \theta \\
 U
  \end{pmatrix} \nonumber \\
  &=&
  (\hat\theta - Z)^t(Z - \theta) - \|U\|^2 \, .
  \end{eqnarray}
Now, since $\hat\theta = \hat\theta(Z,U)$ depends only on $Z$, by Stein type identity, we have
\begin{eqnarray}\label{Stein type identity}
 \esp \! \left[ (\hat\theta - Z)^t(Z - \theta)\right] &=&
 \esp \! \left[ \frac{\|U\|^2}{n-p} \, {\rm div}_Z (\hat\theta - Z) \right]\nonumber \\
 &=&
 \esp \! \left[ \frac{\|U\|^2}{n-p} \left({\rm div}_Z \hat\theta - p\right) \right]
 \end{eqnarray}
so that
  \begin{eqnarray}\label{ExpectInnerProd}
  \esp[(X  \hat\beta - Y)^t (Y - X \beta)] \! \!&=& \! \!
  \esp \! \left[ \frac{\|U\|^2}{n-p} \left({\rm div}_Z \hat\theta - p\right) - \|U\|^2 \right]
  \nonumber \\
  \! \!&=& \! \!
  \esp \! \left[ \frac{\|U\|^2}{n-p} \, {\rm div}_Z \hat\theta - \frac{n}{n-p} \, \|U\|^2 \right]
  \nonumber \\
  \! \!&=& \! \!
  \esp  [\hat\sigma^2(Y) \, {\rm div}_Y X \hat\beta - n \, \hat\sigma^2(Y) ]
  \end{eqnarray}
by (\ref{eq:LS_U}) and since ${\rm div}_Z   \hat\theta = {\rm div}_Y   X \hat\beta$ by Lemma
\ref{lem:divergence}.
Finally, gathering the expressions in (\ref{LossDecomp}), (\ref{eq:VarEstimSansBiais}), and (\ref{ExpectInnerProd})
gives the desired result.
\end{proof}

From the equivalence between $\delta_0$, $C_p$ and AIC under a Gaussian
assumption, and the unbiasedness of $\delta_0$ under the wide class of
spherically symmetric distributions, we conclude that $C_p$ and AIC
derived under the Gaussian distribution is as
 {reasonable} selection criteria for spherically symmetric distributions,
although their original properties may not have been verified in this context.
Also,  { criticisms concerning on the complexity of selected models still stand}.

\begin{remark}\label{rem:df}
  Note that the extension of Stein's lemma in Theorem \ref{stein-type} implies that
  $\widehat{df}={\rm div}_{Y} X \hat\beta$ is also an unbiased estimator of
  $df$ under the spherical assumption.
  Moreover, we would like to point out that the  independence  of  $\hat\sigma^2$  used in the proof of Theorem \ref{th_gauss} in the Gaussian case is no longer necessary.
 Also, to require that  $\hat\beta$  depends on $Q_1^t Y$ only is equivalent to say that  $\hat\beta$ is a function of the least squares estimator.
 When this hypothesis is not available, an extended Stein type identity can be derived \citep{fourdrinier2003robust}.
\end{remark}

\vspace*{-.25cm}

 {\section{Unbiased loss estimators for elliptically symmetric distributions}
\label{sec:elliptic}

Spherical errors clearly generalize the traditional \iid Gaussian case.
However, it is important  to take into account possible correlations among the error components, as in the Gaussian case with a general covariance matrix.
The family of elliptically symmetric distributions is then the natural extension of this Gaussian framework.

Another interesting extension also would have been to non-Gaussian errors. 
Unfortunately, in that case, the only  context in which Stein-type identities are available is the one of exponential family  \citep{hudson1978natural}
and no transition from the linear regression model to a canonical form, as in  \eqref{eq:canoZ_spheric}, is known.
In return,  the framework of elliptically symmetric distributions allows such an approach.
This section is devoted to deriving an unbiased estimator of loss for this case. 

\vspace*{-.25cm}

\subsection{Multivariate elliptical distributions}
\label{sec:elliptic-distrib}

For a random vector $Y$ that has a spherically symmetric distribution, its components are uncorrelated (and independent in the only Gaussian case).
It is important to consider  possible correlations through a general covariance matrix $\Sigma$.
Regarding the linear regression model, the natural extension of spherically symmetric distributions is
the notion of elliptically symmetric distributions
which may be defined through {$\Sigma^{-1}$-orthogonal} transformations (playing the role of orthogonal transformations in the spherical case).

\begin{definition}
A matrix  $Q$ is said to be {$\Sigma^{-1}$-orthogonal}
if it preserves the inner product associated to $ \Sigma^{-1}$,
that is, for all vectors $x$ and $y$,
$$
x^t  \Sigma^{-1} y = (Qx)^t  \Sigma^{-1} Qy ,
$$
or, equivalently, if
\begin{equation}
  \label{eq:OI}
Q^t  \Sigma^{-1} Q=  \Sigma^{-1} .
\end{equation}
\end{definition}

Elliptically symmetric distributions are defined as follows.

\begin{definition}
\label{def:ESD}
Let $\Sigma$ be a positive definite matrix.
A random vector $\varepsilon$ with  covariance matrix proportional to $\Sigma$ is said to be elliptically symmetric if,
for any {$\Sigma^{-1}$-orthogonal} matrix $Q$,
the law of $Q\varepsilon$ is the same as the one of $\varepsilon$. We note $\varepsilon\sim\mathcal{E}_n(0,\Sigma)$.
\end{definition}
Note that any random vector $\varepsilon$ in Definition  \ref{def:ESD} is necessarily centered.
 For any fixed vector $\mu$, the distribution of $\varepsilon + \mu$ will be said elliptically symmetric around $\mu$.
Clearly, a centered Gaussian random vector with covariance matrix $\Sigma$ is elliptically symmetric distributed.
More generally, for any positive matrix  $\Sigma$ and any random vector $W$ with a spherically symmetric distribution,
the random vector $\Sigma^{1/2} W$ is  elliptically symmetric distributed with  covariance matrix~$\Sigma$ (provided $\cov(W) = I$).

Thus, the spherically symmetric  distributions given in subsection 3.1
lead to examples of elliptically symmetric  distributions taking into account possible correlations.

\subsection{The canonical form of the linear regression model in the elliptical case}
\label{sec:cf-elliptic}

In the elliptical case, it is worth considering the multivariate linear regression model, for instance in \cite{KubokawaSrivastava1999AS}
\begin{equation}
  \label{eq:LM2_matrix}
  \bY = X\bbeta+ \bepsilon,
\end{equation}
where $\bY$ is an $n \times m$ matrix of response variable,
$X$ the $n \times p$  known design matrix,
$\bbeta$ is $p \times m$ matrix of unknown parameters
and
$\bepsilon$  an $n \times m$ random matrix of errors
having an elliptically symmetric distribution with scale matrix $\Sigma$.
This approach allows to take into account the fact that $\Sigma$ is unknown.
We assume also that $p + m +1< n $.
In the line of the previous section, we now specify the ellipticity of the distribution of the matrix $\bepsilon$ through its (line based) vectorization $\vec(\bepsilon^t)$ such that $\bepsilon_{ij} = \vec(\bepsilon^t)_{m(i-1)+j}$.
Note that considering column base vectorization would have made trouble for us in the canonical decomposition of the  error  in (\ref{eq:canonique_elliptic}).

In the following, we consider a specific subclass of elliptical distributions for matrices:
the class of vector-elliptical (contoured) matrix distributions \citep[see][]{fang1990generalized}.
Thus, regarding the distribution of $\bepsilon$, we assume that,  
and for any  $(I_n \otimes   \Sigma^{-1})$-orthogonal transformation $nm \times nm$  matrix $\clQ$,
the law of $\clQ \, \vec(\bepsilon^t)$ is the same as that of $\vec(\bepsilon^t)$.
Note that, if $\bepsilon$ has a density, it is of the form
\begin{equation}  \label{eq:densite_elliptical}
\bepsilon \mapsto  |\Sigma|^{-n/2} \, f\bigl(\Tr(\bepsilon \Sigma^{-1} \bepsilon^t  ) \bigr) ,
\end{equation}
for some function $f$.
The importance of the transposition is underlined by \cite{muirhead2009aspects}.
When the covariance exists, it is proportional to the scale matrix $\Sigma$.
 In that case, let $\tau^2$ be this proportionality coefficient so that
 \begin{equation}\label{eq:bruit}
 \esp \bigl[ \Tr(\bepsilon \Sigma^{-1} \bepsilon^t  ) \bigr] = \esp \bigl[\vec\bigl\{ (\bepsilon \Sigma^{-1/2})^t\bigr\}^t \vec\bigl\{(\bepsilon \Sigma^{-1/2})^t\bigr\}\bigr] = nm \tau^2 ,
\end{equation}
 since $\vec\bigl\{ (\bepsilon \Sigma^{-1/2})^t\bigr\}$ has spherically symmetric distribution with proportionality coefficient $\tau^2$.
See  \cite{fang1990generalized} for an approach through characteristic functions and more general classes of elliptical matrix distributions.

The canonical form of   \eqref{eq:LM2_matrix} can be derived as in the spherical case.
Let $Q = (Q _1 \; Q_2)$  be defined as previously in Section~\ref{sec:cf}. 
According to  \eqref{eq:LM2_matrix}, let
\begin{equation}  \label{eq:canoZ}
Q^t \bY =
{\begin{pmatrix}
Q_1^t \\
Q_2^t
\end{pmatrix}
}\bY =
\begin{pmatrix}
Q_1^t \\
Q_2^t
\end{pmatrix}
X \bbeta +  Q^t \bepsilon =
\begin{pmatrix}
\btheta \\
\mathbf{0}
\end{pmatrix}
 +  Q^t \bepsilon
\end{equation}
with matrices $\btheta = Q_1^t X \bbeta$ and $Q_2^t X \bbeta = \mathbf{0}$ since, by construction, the columns of $Q_2$ are orthogonal to those of $X$.
Now, using \eqref{eq:LM2_matrix} and \eqref{eq:densite_elliptical},
$\bY - X\bbeta$ has  density
\begin{eqnarray*}  
\bepsilon &\mapsto  \quad & |\Sigma|^{-n/2} \, f\bigl(\Tr[Q^t \bepsilon \Sigma^{-1} \bepsilon^t Q ] \bigr) \\
&\quad  = &  |\Sigma|^{-n/2} \, f\left(\Tr \left[ \begin{pmatrix}
\bz -\btheta\\
\bu
\end{pmatrix}
 \Sigma^{-1}\begin{pmatrix}
\bz -\btheta\\
\bu
\end{pmatrix}^t \right]
  \right)
\end{eqnarray*}
and hence  $(\bZ^t \, \bU^t)^t := Q^t \bY$ has an elliptically
symmetric distribution about the corresponding column of $(\btheta^t \, \mathbf{0}^t)^t$. In this sense, the model
\begin{displaymath}
\begin{pmatrix}
\bZ \\
\bU
\end{pmatrix}
=
\begin{pmatrix}
\btheta \\
\mathbf{0}
\end{pmatrix}
+
\begin{pmatrix}
Q_1^t \bepsilon\\
Q_2^t \bepsilon
\end{pmatrix}
\end{displaymath}
is the canonical form of the  linear regression model \eqref{eq:canoZ}.
Note that, if it exists, the joint density of $\bZ$ and $ \bU$ is of the form
\begin{eqnarray}\label{eq:density}
 (\bz,\bu) &\mapsto& |\Sigma|^{-n/2} \, f\bigl(\Tr( (\bz - \btheta) \Sigma^{-1} (\bz - \btheta)^t  )  + \Tr(\bu \Sigma^{-1} \bu^t  )\bigr) ,
\end{eqnarray}
for some function $f$.
Extending  the vector case considered in \cite{fourdrinier2003robust}
we denote by $\esp$
 the expectation with respect to the density in \eqref{eq:density} and
$\esp^{\star}$ the expectation with respect to the density
\begin{eqnarray}\label{F-density}
 (\bz,\bu) &\mapsto& \frac{1}{C} \, |\Sigma|^{-n/2} \,
 F \bigl(\Tr( (\bz - \btheta) \Sigma^{-1} (\bz - \btheta)^t  )  + \Tr(\bu \Sigma^{-1} \bu^t  )\bigr)
\, ,
\end{eqnarray}
where
\begin{eqnarray}\label{bigf}
F(t) =\frac{1}{2} \, \int^{t}_{0} f(u) du.
\end{eqnarray}
and where it is assumed that
\begin{eqnarray} \label{equa1.9}
 C = \int_{\R^{mn}}
 |\Sigma|^{-n/2} \,
 F \bigl(\Tr( (\bz - \btheta) \Sigma^{-1} (\bz - \btheta)^t  )  + \Tr(\bu \Sigma^{-1} \bu^t  )\bigr)
 \, d\bz \, d\bu < \infty \, .
\end{eqnarray}
Note that, in the Gaussian case, these two expectations coincide.

We consider the problem of estimating $\bbeta$ with the invariant loss
\begin{equation}\label{eq:inv_loss}
L(\hbbeta,\bbeta) = \Tr\bigl(  (X\hbbeta - X\bbeta)  \Sigma^{-1}  (X\hbbeta - X\bbeta)^t \bigr)  .
\end{equation}
It is worth noting that this loss can be written through the Frobenius norm associated to $\Sigma^{-1}$
that is, for any $n \times m$ matrix $M$,
$$
\| M \|_{ \Sigma^{-1}}^2 = \Tr(  M  \Sigma^{-1}  M^t ) .
$$
As in the spherical case, the nature of $Q$ implies
\begin{equation}
\label{eq:canonique_elliptic}
\|\bY - X\hbbeta\|_{ \Sigma^{-1}}^2  = \|\hbtheta - \bZ\|_{ \Sigma^{-1}}^2  +  \| \bU \|_{ \Sigma^{-1}}^2 ,
\end{equation}
which is the analog of \eqref{eq:decompcannonMC}.

In this context, we can give the following Stein-type identity for the multivariate elliptical distributions.
\begin{thm}[Stein-type identity]\label{thm:stein-type_eliptic}
  Given $(\bZ,\bU)$ a $n \times m$ random matrix following a vector-elliptical  distribution around $(\btheta,\mathbf{0})$ with scale matrix $\Sigma$ and
  $g:\mathbb{R}^{pm} \rightarrow \mathbb{R}^{pm}$ a weakly differentiable
  function, we have
  \begin{eqnarray}\label{eq:steinES}
    \mathbb{E}_\theta \bigl[ \Tr (\bZ - \btheta) \Sigma^{-1} g(\bZ)^t \bigr] = \mathbb{E}_\theta\Bigl[\frac{\; \|\bU\|_{ \Sigma^{-1}}^2}{(n-p)m}  \, {\rm div}_{\bZ} g(\bZ)\Bigr],
  \end{eqnarray}
  provided both expectations exist.
\end{thm}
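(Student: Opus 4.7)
My plan is to reduce the claim to the spherical Stein-type identity of Theorem \ref{stein-type} by pre-whitening with $\Sigma^{-1/2}$, so that $(\bZ, \bU)$ becomes a standard spherically symmetric vector in $\R^{nm}$.

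First, set $\tilde{\bZ} = \bZ \, \Sigma^{-1/2}$ and $\tilde{\bU} = \bU \, \Sigma^{-1/2}$ (with $\Sigma^{-1/2}$ the symmetric square root), and let
\[
V = \vec(\tilde{\bZ}^t) \in \R^{pm}, \qquad W = \vec(\tilde{\bU}^t) \in \R^{(n-p)m}.
\]
Using the vectorization identity $\vec(\Sigma^{-1/2} \bZ^t) = (I_p \otimes \Sigma^{-1/2}) \vec(\bZ^t)$, the joint density \eqref{eq:density} becomes, after this linear change of variables, a function of $\|V - \mu_V\|^2 + \|W\|^2$ alone, with $\mu_V = (I_p \otimes \Sigma^{-1/2}) \vec(\btheta^t)$. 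Hence $(V, W)$ is spherically symmetric about $(\mu_V, \mathbf{0})$ in $\R^{nm}$ in the sense of Definition \ref{def:SSD}.

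Next, I would introduce the auxiliary function $\tilde g : \R^{pm} \to \R^{pm}$ defined by $\tilde g(V) = (I_p \otimes \Sigma^{-1/2}) \vec(g(\bZ)^t)$, where $\bZ$ is recovered from $V$ through $\vec(\bZ^t) = (I_p \otimes \Sigma^{1/2}) V$. Using the bilinear identity $\Tr(A \Sigma^{-1} B^t) = \vec(A^t)^t (I_p \otimes \Sigma^{-1}) \vec(B^t)$ for $p \times m$ matrices $A$ and $B$ (applied with $A = \bZ - \btheta$ and $B = g(\bZ)$) together with the factorization $I_p \otimes \Sigma^{-1} = (I_p \otimes \Sigma^{-1/2})^2$, the left-hand side of \eqref{eq:steinES} rewrites as $\mathbb{E}_\theta[(V - \mu_V)^t \tilde g(V)]$. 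Applying Theorem \ref{stein-type} in dimension $nm$, with $(V, W)$ playing the role of $(Z, U)$ and $(pm, (n-p)m)$ the role of $(p, n-p)$, then yields
\[
\mathbb{E}_\theta\bigl[(V - \mu_V)^t \tilde g(V)\bigr] = \mathbb{E}_\theta \!\left[\frac{\|W\|^2}{(n-p)m} \; {\rm div}_V \tilde g(V)\right].
\]

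The last step is to match this right-hand side with the claimed expression. That $\|W\|^2 = \Tr(\tilde{\bU} \tilde{\bU}^t) = \Tr(\bU \Sigma^{-1} \bU^t) = \|\bU\|_{\Sigma^{-1}}^2$ is immediate. For the divergence, the chain rule gives $J_{\tilde g}(V) = (I_p \otimes \Sigma^{-1/2}) \, J_g \, (I_p \otimes \Sigma^{1/2})$, so cyclicity of the trace yields ${\rm div}_V \tilde g(V) = \Tr(J_g) = {\rm div}_{\bZ} g(\bZ)$, which closes the argument. The main bookkeeping obstacle is the consistent use of the row-stacking vectorization convention $\vec(\cdot^t)$ used throughout Section \ref{sec:elliptic-distrib}, which produces $(I_p \otimes \Sigma^{\pm 1/2})$ rather than $(\Sigma^{\pm 1/2} \otimes I_p)$ in every Kronecker expression; once this is settled, both the identification of the bilinear form on the left and the cancellation of the whitening factors in the divergence on the right are routine consequences of standard vectorization identities.
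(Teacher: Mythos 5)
Your proposal is correct and follows essentially the same route as the paper, which disposes of this theorem in one sentence by observing that the vectorization of $(\bZ^t\,\bU^t)^t$ has scale matrix $I_n\otimes\Sigma$ and invoking the spherical identity of \cite{fourdrinier1995estimation}; your whitening by $I_p\otimes\Sigma^{-1/2}$, the bilinear identity $\Tr(A\Sigma^{-1}B^t)=\vec(A^t)^t(I_p\otimes\Sigma^{-1})\vec(B^t)$, and the cancellation of the conjugating factors in the divergence are exactly the details that remark leaves implicit. The bookkeeping with the row-stacking convention checks out, so no gaps.
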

Taking into account the fact that the scale matrix of the vectorization of  $(\bZ^t \, \bU^t)^t$ is $I_n \otimes \Sigma$,
Theorem 5 can be derived from the spherical version given in \cite{fourdrinier1995estimation}.

To deduce an unbiased type estimator of loss for the elliptical case,
the following adaption from Lemma 1 in \cite{fourdrinier2003robust} is needed.
\begin{thm}[Stein-Haff-type identity] \label{thm:Haff_stein_eliptic}
Under the conditions of Theorem \ref{thm:stein-type_eliptic},
let $S = \bU^t \bU$.
For any $m \times m$ matrix function $T(\bZ,S)$ weakly differentiable with respect to $S$, we have
\begin{equation}
\esp \bigl[ \Tr \bigr(T(\bZ,S)  \Sigma^{-1} \bigl) \bigl] =  C \; \esps \!  \bigl[ 2 D_{1/2}^\star T(\bZ,S)  +  a \; \Tr\bigr(S^{-1} T(\bZ,S)\bigl) \bigl]
\end{equation}
 with $a = n -p -m -1$ and where the operator $D_{1/2}^\star$ is be defined by
 $$
 D_{1/2}^\star T(\bZ,S) = \sum_{i=1}^m \frac{\partial T_{ii}(\bZ,S)}{\partial S_{ii}} + \frac{1}{2}  \sum_{i\neq j} \frac{\partial T_{ij}(\bZ,S)}{\partial S_{ij}}  .
 $$
\end{thm}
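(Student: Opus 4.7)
The plan is to adapt the classical Stein--Haff argument, following Lemma~1 of \cite{fourdrinier2003robust}, to the matrix elliptical setting via the auxiliary $F$-density in~\eqref{F-density}. The core idea is a matrix integration by parts in the variable $\bu$ against a judiciously chosen $(n-p)\times m$ field, which converts the $f$-expectation $\esp$ on the left-hand side into the $F$-expectation $\esps$ on the right.

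I would first express the left-hand side as $\int |\Sigma|^{-n/2} f(\cdot)\,\Tr(T(\bz,\bu^t\bu)\Sigma^{-1})\,d\bz\,d\bu$ using~\eqref{eq:density}, and substitute $f=2F'$ from~\eqref{bigf}. The key ansatz is the $(n-p)\times m$ matrix-valued function $H(\bu,\bz) = \bu\,S^{-1}\,T(\bz,S)$ with $S=\bu^t\bu$. Two elementary properties make it work. On one hand, $H^t\bu = T^tS^{-1}\bu^t\bu = T^t$, so by symmetry of $\Sigma^{-1}$,
\[
\Tr(H^t\bu\Sigma^{-1}) = \Tr(T^t\Sigma^{-1}) = \Tr(T\Sigma^{-1}).
\]
On the other hand, since $F(\cdot)$ depends on $\bu$ only through $\Tr(\bu\Sigma^{-1}\bu^t)$, direct differentiation gives $\partial F/\partial u_{ij} = 2F'(\cdot)(\bu\Sigma^{-1})_{ij}$, so that the integrand $2F'(\cdot)\Tr(T\Sigma^{-1})$ equals the Frobenius pairing $\sum_{i,j} H_{ij}\,\partial F/\partial u_{ij}$. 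Applying the divergence theorem to $HF$, with boundary terms vanishing thanks to~\eqref{equa1.9}, then yields, up to the sign convention implicit in~\eqref{bigf},
\[
\esp[\Tr(T\Sigma^{-1})] = \pm\,C\,\esps\bigl[\div_{\bu}(\bu S^{-1}T)\bigr].
\]

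The bulk of the work is the evaluation of $\div_{\bu}(\bu S^{-1}T)$. Writing $H_{ij}=\sum_{k,l}u_{ik}(S^{-1})_{kl}T_{lj}(\bZ,S)$ and applying the product rule in $u_{ij}$ produces three contributions. Differentiating the explicit factor $u_{ik}$ yields $(n-p)\Tr(S^{-1}T)$. Differentiating $(S^{-1})_{kl}$ via the identity $\partial S^{-1}/\partial u_{ij}=-S^{-1}(\partial S/\partial u_{ij})S^{-1}$, with $\partial S_{ab}/\partial u_{ij}=\delta_{aj}u_{ib}+u_{ia}\delta_{bj}$, and simplifying using $\bu S^{-1}\bu^t\bu=\bu$, gives $-(m+1)\Tr(S^{-1}T)$. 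Finally, the chain-rule differentiation of $T_{lj}(\bZ,S)$ through $S$ produces $2D_{1/2}^\star T(\bZ,S)$, where the weight $\tfrac12$ on off-diagonal entries arises naturally from the $(a,b)$-symmetry of $\partial S_{ab}/\partial u_{ij}$. Summing gives $\div_{\bu}(\bu S^{-1}T) = a\,\Tr(S^{-1}T) + 2D_{1/2}^\star T$ with $a=n-p-m-1$, which combined with the displayed identity proves the theorem.

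The main obstacle I anticipate is the last step of the divergence computation: matching the raw index expression for the $T$-derivative contribution to the operator $2D_{1/2}^\star T$ as stated requires a careful handling of the symmetric-matrix convention for $\partial/\partial S_{ab}$, and the factor $\tfrac12$ on off-diagonal entries must be tracked through the chain rule. A secondary technical concern is the justification of the divergence theorem itself, since $H$ involves $S^{-1}$ and is singular on the locus where $\bu$ has deficient column rank; this is handled by the integrability assumption~\eqref{equa1.9} together with the hypothesis $n > p+m+1$, which ensures $a\geq 1$ and enough margin for the boundary terms to vanish.
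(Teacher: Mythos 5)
The paper offers no proof of this theorem: it is stated as an adaptation of Lemma~1 of \cite{fourdrinier2003robust}, and your integration-by-parts argument with the field $H=\bu S^{-1}T(\bz,S)$ --- using $\bu^t H=T$, the chain rule through $S=\bu^t\bu$, and the identity $\div_{\bu}(\bu S^{-1}T)=(n-p-m-1)\Tr(S^{-1}T)+2D_{1/2}^\star T$ --- is exactly the standard Stein--Haff computation underlying that cited lemma, and it is correct. The one caveat you rightly flag is the sign convention in \eqref{bigf}: as printed, $F(t)=\tfrac12\int_0^t f$ gives $f=+2F'$ and a non-integrable $F$, so the identity with $+C$ requires the intended definition $F(t)=\tfrac12\int_t^{\infty}f(u)\,du$.
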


\subsection{Unbiased type estimator of loss}
\label{sec:unbiased-elliptic}

In this section we provide an unbiased type estimator $\delta_0^{\mbox{\scriptsize inv}}$ of the invariant loss  $\|X\hat\beta(Y) - X\beta \|_{\Sigma^{-1}}^2/ \tau^2 $
in the sense of the following definition.
\begin{definition}[$\esps$-unbiasedness] \label{esb_star}
Let $\bY$ be a random matrix in $\mathbb{R}^{nm}$ with mean
$\bmu\in\mathbb{R}^{nm}$ and let $\hat\bmu(\bY)$ be any estimator of $\bmu$. An estimator $\delta_0(\bY)$ of the  loss
$L(\hat\bmu(\bY),\bmu)$ 
is said to be
\emph{$\esps$-unbiased} if, for all $\bmu\in\mathbb{R}^{nm}$, 
\begin{displaymath}
   \label{eq:Estar_unbiased}
 \esps\bigr[\delta_0(\bY)\bigl] = \esp\bigr[L(\hat\bmu(\bY),\bmu) \bigl] ,
\end{displaymath}
where $\esp$ is the expectation associated with the original density in  \eqref{eq:density}
while $\esps$ is the expectation  with respect to the modified density  \eqref{F-density}.
\end{definition}
In the usual notion of unbiasedness, $\esps$ is replaced by  $\esp$.
The need of $\esps$ is due to the presence of the matrix $\Sigma$.
\begin{thm}[Unbiased type estimator of loss for the elliptical case]
   Let $\bY \sim\mathcal{E}_n(X\bbeta,\Sigma)$   
   and let $\hat\bbeta = \hat\bbeta(\bY)$ be
an estimator of $\bbeta$  depending only on $Q_1^t \bY$. If $\hat\bbeta(\bY)$ is
weakly differentiable with respect to $\bY$, then the statistic
\begin{equation}
   \label{eq:d0_eiptic}
\delta_0^{\mbox{\scriptsize inv}} =   (n-p-m-1)  \|\bY - X\hbbeta\|_{S^{-1}}^2 + 2  \, {\rm div}_{\bY} X \hbbeta(\bY)  - nm
\end{equation}
is an $\esps$-unbiased estimator of the invariant loss. 
\end{thm}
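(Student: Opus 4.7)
The plan is to adapt the canonical-form argument of Theorem \ref{th_spher} with two new ingredients: the Stein-Haff-type identity of Theorem \ref{thm:Haff_stein_eliptic} (which converts $\Sigma^{-1}$-norms into $S^{-1}$-norms), and the accompanying shift from $\esp$ to $\esps$ that this identity naturally produces. All calculations will be carried out in canonical coordinates: the analog of Lemma \ref{lem:divergence} gives ${\rm div}_{\bY} X\hbbeta = {\rm div}_{\bZ} \hbtheta$, and by the orthogonality of $Q$ one has the decomposition \eqref{eq:canonique_elliptic} together with an analogous identity for $\|\bY - X\bbeta\|_{\Sigma^{-1}}^2$.

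I would start by decomposing
\[
\|X\hbbeta - X\bbeta\|_{\Sigma^{-1}}^2 = \|\bY - X\hbbeta\|_{\Sigma^{-1}}^2 + \|\bY - X\bbeta\|_{\Sigma^{-1}}^2 + 2\,\Tr\bigl((X\hbbeta - \bY)\Sigma^{-1}(\bY - X\bbeta)^t\bigr)
\]
and handling the three pieces separately. The middle term has $\esp$-expectation $nm\tau^2$ by \eqref{eq:bruit}. Rewriting the cross term in canonical coordinates (the off-block products vanish because $Q_1^t Q_2 = 0$) yields $\Tr((\hbtheta - \bZ)\Sigma^{-1}(\bZ - \btheta)^t) - \|\bU\|_{\Sigma^{-1}}^2$, and Theorem \ref{thm:stein-type_eliptic} applied with $g(\bZ) = \hbtheta(\bZ) - \bZ$ reduces the first piece to $\esp[\|\bU\|_{\Sigma^{-1}}^2({\rm div}_{\bZ}\hbtheta - pm)/((n-p)m)]$.

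The pivotal step is to replace every remaining $\Sigma^{-1}$ by $S^{-1}$ via Theorem \ref{thm:Haff_stein_eliptic}. For a generic scalar function $h(\bZ)$, taking $T(\bZ,S) = h(\bZ)\,S$ gives $D_{1/2}^\star(h S) = h\,m(m+1)/2$ and $\Tr(S^{-1} h S) = m\,h$, so the two contributions sum to $h\,m(n-p)$ and the identity produces $\esp[\|\bU\|_{\Sigma^{-1}}^2 h(\bZ)] = C\,(n-p)m\,\esps[h(\bZ)]$. Specializing to $h\equiv 1$ and matching with \eqref{eq:bruit} yields the fundamental identification $C = \tau^2$. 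The same identity applied with $T(\bZ,S) = (\bZ - \hbtheta)^t(\bZ - \hbtheta) + S = (\bY - X\hbbeta)^t(\bY - X\hbbeta)$ converts $\esp[\|\bY - X\hbbeta\|_{\Sigma^{-1}}^2]$ into $C\,\esps[m(m+1) + (n-p-m-1)\|\bY - X\hbbeta\|_{S^{-1}}^2]$.

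Assembling these pieces, dividing by $\tau^2 = C$, and using the analog of Lemma \ref{lem:divergence} to replace ${\rm div}_{\bZ}\hbtheta$ by ${\rm div}_{\bY} X\hbbeta$, everything collapses to $\esp[L] = \esps[\delta_0^{\mbox{\scriptsize inv}}]$. The main obstacle I expect is the careful bookkeeping of the additive constants generated along the way: the $m(m+1)$ coming from $D_{1/2}^\star$, the $pm$ and $(n-p)m$ from Theorem \ref{thm:stein-type_eliptic}, and the $nm\tau^2$ from the noise term must all combine to leave exactly the $-nm$ constant in \eqref{eq:d0_eiptic}. The identification $C = \tau^2$, made possible by \eqref{eq:bruit}, is the key point that makes the modified density \eqref{F-density} compatible with Definition \ref{esb_star} and that converts every $\esp$ into $\esps$ in one coherent stroke.
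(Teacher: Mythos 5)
Your strategy is the same as the paper's: the canonical decomposition of the loss, the Stein-type identity of Theorem \ref{thm:stein-type_eliptic} for the cross term, and two applications of the Stein--Haff-type identity of Theorem \ref{thm:Haff_stein_eliptic} to trade $\Sigma^{-1}$ for $S^{-1}$ and $\esp$ for $\esps$, with the identification $C=\tau^2$ obtained exactly as you obtain it. Up to that point your account is correct, and in one respect it is more careful than the paper's: since $X\hbbeta=Q_1\hbtheta$ when $\hbbeta$ depends only on $Q_1^t\bY$, one has $(\bY-X\hbbeta)^t(\bY-X\hbbeta)=(\bZ-\hbtheta)^t(\bZ-\hbtheta)+S$, so this $T$ is \emph{not} independent of $S$, and your extra term $2D^\star_{1/2}T=m(m+1)$ is genuinely there.

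The gap is that you never carry out the ``careful bookkeeping of the additive constants'' that you yourself flag as the main obstacle, and it does not come out as you assert. Collecting your own pieces: the cross term contributes $2C\,\esps[{\rm div}_{\bZ}\hbtheta]-2Cnm$ (combining the $-pm$ from ${\rm div}_{\bZ}(\hbtheta-\bZ)$ with $-2\esp[\|\bU\|^2_{\Sigma^{-1}}]=-2C(n-p)m$), the noise term contributes $+Cnm$, and your first Stein--Haff application contributes $C\,\esps\bigl[m(m+1)+(n-p-m-1)\|\bY-X\hbbeta\|^2_{S^{-1}}\bigr]$. Dividing by $C=\tau^2$ gives
\begin{displaymath}
\esp\Bigl[\tfrac{1}{\tau^2}\|X\hbbeta-X\bbeta\|^2_{\Sigma^{-1}}\Bigr]
=\esps\bigl[(n-p-m-1)\|\bY-X\hbbeta\|^2_{S^{-1}}+2\,{\rm div}_{\bY}X\hbbeta-nm+m(m+1)\bigr],
\end{displaymath}
so the constant is $-nm+m(m+1)$, not $-nm$: your final ``everything collapses'' step fails, and the calculation you deferred is precisely where the difficulty sits. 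A sanity check confirms that the discrepancy is real and not an error in your $m(m+1)$ term: take $m=1$, Gaussian errors and $\hbbeta=\hat\beta^{LS}$; then $\delta_0^{\mbox{\scriptsize inv}}=(n-p-2)+2p-n=p-2$, whereas $\esp[\|X\hat\beta^{LS}-X\beta\|^2/\sigma^2]=p$. (The paper's own proof avoids confronting this only by declaring $T=(\bY-X\hbbeta)^t(\bY-X\hbbeta)$ independent of $S$, which the decomposition \eqref{eq:canonique_elliptic} shows to be false.) To complete your argument you must either report the corrected constant or explain why the $m(m+1)$ term vanishes; it does not.
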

\begin{proof}
The invariant loss \eqref{eq:inv_loss} can be decomposed as
\begin{equation} \label{eq:eliptic_decomp}
\|X\hbbeta - X\bbeta\|_{ \Sigma^{-1}}^2  = \|\bY - X\hbbeta\|_{ \Sigma^{-1}}^2  - \|\bY - X\bbeta\|_{ \Sigma^{-1}}^2
                                                              + 2 \Tr   (X\hbbeta - X\bbeta)  \Sigma^{-1}  (\bY - X\bbeta)^t   .
 \end{equation}
By orthogonally invariance of the classical inner product, the third term verifies
\begin{equation*}
\Tr   (X\hbbeta - X\bbeta)  \Sigma^{-1}  (\bY - X\bbeta)^t = \Tr   (\hbtheta - \btheta)  \Sigma^{-1}  (\bZ - \btheta)^t .
\end{equation*}
Now, since $\hat\btheta = \hat\btheta(\bZ,\bU)$ depends only on $\bZ$, by the Stein type identity given in Theorem \ref{thm:stein-type_eliptic}, we have
\begin{eqnarray}\label{Stein type identity eliptique}
 \esp \! \left[ \Tr   (\hbtheta - \btheta)  \Sigma^{-1}  (Z - \btheta)^t \right] &=&
 \esp \! \left[ \frac{\|\bU\|_{ \Sigma^{-1}}^2}{(n-p)m} \, {\rm div}_{\bZ}  (\hat\btheta) \right] , \nonumber
  \end{eqnarray}
so that, thanks to  \eqref{eq:bruit}, taking expectation in  \eqref{eq:eliptic_decomp} gives
$$
 \esp \! \left[ \|X\hbbeta - X\bbeta\|_{ \Sigma^{-1}}^2  \right] =  \esp \! \left[ \|\bY - X\hbbeta\|_{ \Sigma^{-1}}^2  \right]  - nm\tau^2
                                                                                          + 2 \,  \esp \! \left[ \frac{\|\bU\|_{ \Sigma^{-1}}^2}{(n-p)m} \, {\rm div}_{\bZ}  (\hat\btheta) \right].
$$

%
%
We get rid of $\Sigma^{-1}$ applying Stein-Haff-type identity (Theorem \ref{thm:Haff_stein_eliptic}) two times.
First     $T(\bZ,S) = (\bY-X\hbbeta)^t (\bY - X\hbbeta)$ being independent of $S$ we have
\begin{equation*}
\esp \! \left[ \|\bY - X\hbbeta\|_{ \Sigma^{-1}}^2  \right]  =
C \; \esps \! \left[  (n-p-m-1) \|\bY - X\hbbeta\|_{ S^{-1}}^2   \right] .
\end{equation*}
Secondly, setting  $T(\bZ,S) =  {\rm div}_{\bZ}  (\hat\btheta)/(n-p)m  \; S$, we have
\begin{equation*}
 \esp \! \left[ \frac{\|\bU\|_{ \Sigma^{-1}}^2}{(n-p)m} \, {\rm div}_{\bZ}  (\hat\btheta) \right] =
 \esp \! \left[ \Tr   (\Sigma^{-1} S) \; \frac{ {\rm div}_{\bZ}  (\hat\btheta)}{(n-p)m} \right] =
  C \; \esps \! \left[  {\rm div}_{\bZ}  (\hat\btheta) \right] .
\end{equation*}
It is worth noticing that 
$
C =  \tau^2
$.
Indeed, thanks to  \eqref{eq:bruit},  $ \esp \! \left[ \|\bU\|_{ \Sigma^{-1}}^2 \right] = (n-p)m \tau^2$
and,
 using Theorem \ref{thm:Haff_stein_eliptic},  $ \esp \! \left[ \|\bU\|_{ \Sigma^{-1}}^2 \right] = C(n-p)m$.
 Finally note that Lemma~\ref{lem:divergence} extends to the matrix case so that 
${\rm div}_{\bZ}   \hat\btheta = {\rm div}_{\bY}   X \hat\bbeta$.
\end{proof}

It is worth noting that  $m=1$ reduces to the spherical case and
that the associated  unbiased estimator of the invariant loss, adapted from equation \eqref{eq:d0_eiptic}, is
\begin{equation} \nonumber
\delta_0^{\mbox{\scriptsize inv}} =   \frac{n-p-2}{\| U \|^2}  \| Y - X\hat\beta\|^2 + 2  \, {\rm div}_{Y} X \hat\beta(Y)  - n ,
\end{equation}
which is the same as the one provided in the Gaussian case in \eqref{eq:delta0_inv}.}

\section{Discussion}
\label{sec:discussion}

In this article we viewed the well-known model selection criteria $C_p$ and AIC
through the lens of loss estimation and related them to an unbiased
estimator of the quadratic prediction loss under a Gaussian
assumption. We then developed unbiased estimators of loss under considerably wider
distributional settings,  spherically symmetric distributions
and the family of elliptically symmetric distributions.
In the spherical context, the unbiased estimator of loss is actually equal to the one derived under the Gaussian law.
Hence, this implies that we do not have to specify the form of the distribution,
the only condition being its spherical symmetry. We also conclude from
the equivalence between unbiased estimators of loss, $C_p$ and AIC
that their form for the Gaussian case is able to handle any
spherically symmetric distribution.
The spherical family is interesting for many practical cases since
it allows a dependence
property between the components of random vectors whenever the
distribution is not Gaussian. Some members of this family also have
heavier tails than Gaussian densities, and thus the unbiased estimator derived here can be robust to outliers.
We also considered a generalization with elliptically symmetric
distributions for the error vector,
 allowing a general covariance matrix $\Sigma$. However, to get such a result,
a matrix regression model which allows estimation of the covariance matrix $\Sigma$  was introduced.

It is well known that unbiased estimators of loss are not the best
estimators and can be improved \citep{johnstone1988inadmissibility,fourdrinier2012improved}.
It was not our intention in this article to derive such estimators, but to explain why their performances can be
similar when departing from the Gaussian assumption.
The improvement over these unbiased estimators requires a way to assess their quality.
This can be done either using oracle inequalities
or the theory of admissible estimators under a certain risk criteria.
Based on a proper risk, a selection rule $\delta_0$ is inadmissible
if we can find a better estimator, say $\delta_\gamma$, that has a smaller risk function for all possible values of the parameter $\beta$,
with strict inequality for some $\beta$. The heuristic of loss estimation is that the closer an estimator is to the true loss, the more we expect
their respective minima to be close.
We are currently working on improved
estimators of loss of the type $\delta_\gamma (Y) =
\delta_0(Y)+\gamma(Y)$, where $\gamma(Y)$ can be thought of
as a data driven penalty.
The selection of such a $\gamma$ term is an important, albeit difficult, research direction.

\section*{Acknowlegment}

{This research was
partially supported by ANR ClasSel grant 08-EMER-002, by the Simons Foundation
grant 209035 and  by NSF Grant DMS-12-08488.}

\end{document}